\documentclass{article}
\usepackage{authblk,amsfonts,amsmath,amssymb,amsthm,dsfont,fancyhdr}
\usepackage{graphicx}
\usepackage{tikz} 
\usepackage{latexsym}
\usepackage{gensymb}
\usepackage{fullpage}
\usepackage{enumerate}
\usepackage{enumitem}
\usepackage{xcolor}
\usepackage{caption}
\usepackage{float}
\usepackage[titletoc]{appendix}
\usetikzlibrary[trees]
\usepackage{array,multirow}
\usepackage{lipsum}
\usepackage{mathrsfs}
\usepackage{indentfirst}
\usepackage[hyphens]{url}
\usepackage[colorlinks=true,, allcolors=blue]{hyperref}

\newcommand\keywords[1]{\textbf{Keywords}: #1}
\newcommand\MSC[1]{\textbf{MSC2020}: #1}

\newtheorem{theorem}{Theorem}[section]

\newtheorem{lemma}[theorem]{Lemma}
\newtheorem{remark}[theorem]{Remark}

\newtheorem{definition}[theorem]{Definition}

\newtheorem{problem}[theorem]{Problem}

\DeclareMathOperator{\rk}{rank}

\usepackage{microtype}
\date{}
\begin{document}
\title{Signed graphs with fixed smallest eigenvalue at least $-3$ and their lattices}
\author[a] {Meng-Yue Cao}
\author[a,b] {Jack H. Koolen}
\author[a] {Jing-Yuan Liu}
\author[c,d] {Qianqian Yang}
%\thanks{Corresponding author}}
\affil[a]{\footnotesize{School of Mathematical Sciences, University of Science and Technology of China, Hefei, 230026, People's Republic of China}}
\affil[b]{\footnotesize{CAS Wu Wen-Tsun Key Laboratory of Mathematics, University of Science and Technology of China, Hefei, 230026, People's Republic of China}}
\affil[c] {\footnotesize{Department of Mathematics, Shanghai University, Shanghai 200444, People's Republic of China}}
\affil[d] {\footnotesize{Newtouch Center for Mathematics of Shanghai University, Shanghai 200444, People's Republic of China}}
\maketitle

	\newcommand\blfootnote[1]{%
		\begingroup
		\renewcommand\thefootnote{}\footnote{#1}%
		\addtocounter{footnote}{-1}%
		\endgroup}
	\blfootnote{E-mail addresses: {\tt caomengyue@ustc.edu.cn} (M.-Y. Cao), {\tt koolen@ustc.edu.cn} (J.H. Koolen), {\tt liujingyuan@mail.ustc.edu.} {cn} (J.-Y. Liu), {\tt  qqyang@shu.edu.cn} (Q. Yang)}
\vspace{-50pt}

\begin{abstract}
 In this paper, we consider connected signed graphs with smallest eigenvalue at least $-3-\varepsilon$ for a small positive constant $\varepsilon$. We prove that if such a signed graph has sufficiently large minimum valency, then its smallest eigenvalue is at least $-3$, and the lattice associated with it, which is generated by squared norm $3$ vectors, is a sublattice of a direct sum of the standard lattice $\mathbb{Z}^n$ and copies of the root lattice $E_8$. Moreover, there exist infinitely many connected signed graphs with smallest eigenvalue at least $-3$ containing it as a proper induced subgraph. Furthermore, we discuss signed graphs with smallest eigenvalue $-3$ arising from rootless irreducible unimodular lattices.
 \end{abstract}\par

\keywords{Unimodular  lattice; Signed graph; Smallest eigenvalue; Hoffman signed graph}\par
\MSC{11H06, 05C50}

\section{Introduction}\label{sec intro}
Throughout this paper, all graphs are finite, undirected, simple and non-empty. \par

A \emph{signed graph} is a pair $(G,\sigma)$, where $G=(V(G),E(G))$ is the \emph{underlying graph} of $(G,\sigma)$ and $\sigma:E(G)\rightarrow\{+,-\}$ is a signing function. The \emph{order} of $(G,\sigma)$ is the order of $G$, and the \emph{valency} of each vertex $x\in V(G)$ is the cardinality of the set $\{y\in V(G)\mid \{x,y\}\in E(G)\}$. If all vertices have the same valency, then $(G,\sigma)$ is called a \emph{regular signed graph}.\par
Let $U\subseteq V(G)$. A \emph{subgraph} of $(G,\sigma)$ on vertex set $U$ is a signed graph $(H,\sigma_H)$ with vertex set $V(H)=U$ such that $E(H)\subseteq\binom{V(H)}{2}\cap E(G)$ and $\sigma_H=\sigma|_{E(H)}$. If $E(H)=\binom{V(H)}{2}\cap E(G)$, then $(H,\sigma_H)$ is called the \emph{induced subgraph} of $(G,\sigma)$ on $U$.\par

The \emph{adjacency matrix} of a signed graph $(G,\sigma)$ is the symmetric matrix $A=A(G,\sigma)$ indexed by $V(G)$ whose $(x,y)$-entry is given by 
\[
A_{xy}=
\begin{cases}
1, & \text{if } \{x,y\}\in E(G)\text{ and }\sigma(\{x,y\})=+,\\[2mm]
-1, & \text{if } \{x,y\}\in E(G)\text{ and }\sigma(\{x,y\})=-,\\[2mm]
0, & \text{otherwise}.
\end{cases}
\]
The \emph{eigenvalues} of $(G,\sigma)$ are the eigenvalues of its adjacency matrix $A$. \par

Let $U\subseteq V(G)$. A signed graph $(G,\tau)$ is said to be obtained from $(G,\sigma)$ by \emph{switching} with respect to $U$ if, for every edge $\{x,y\}\in E(G)$,
\[
\tau(\{x,y\})=
\begin{cases}
\sigma(\{x,y\}), & \text{if } x,y\in U \text{ or } x,y\in V(G)\setminus U,\\[2mm]
-\sigma(\{x,y\}), & \text{otherwise}.
\end{cases}
\]
If $(G,\tau)$ can be obtained from $(G,\sigma)$ by switching with respect to some subset of $V(G)$, then $(G,\tau)$ and $(G,\sigma)$ are \emph{switching equivalent}. The \emph{switching class} of a signed graph $(G,\sigma)$, denoted by $[G,\sigma]$ is the set of all signed graphs that are switching equivalent to $(G,\sigma)$. Note that switching preserves the spectrum. Consequently, all signed graphs in the same switching class have the same spectrum. This paper studies signed graphs with fixed smallest eigenvalue and large minimum valency.\par

In 2021, Gavrilyuk, Munemasa, Sano and Taniguchi \cite{Gavrilyuk2021signed} 
gave the following characterization of connected signed graphs whose smallest eigenvalue lies in $(-2, -1]$.
\begin{theorem}[{\cite[Theorem $1.2$]{Gavrilyuk2021signed}}]\label{GMST result on -2}
Let $\lambda$ be a real number inside the interval $(-2,-1]$. There exists a positive integer $\kappa_1(\lambda)$ such that, if a connected signed graph $(G,\sigma)$ has smallest eigenvalue $\lambda_{\min}(G,\sigma)\geq\lambda$ and minimum valency at least $\kappa_1(\lambda)$, then $(G,\sigma)$ is switching equivalent to a signed graph whose underlying graph is a complete graph with each edge signed $+$, and hence $\lambda_{\min} (G,\sigma)=-1$. 
\end{theorem}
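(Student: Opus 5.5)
The approach is to pass to a Gram representation and combine it with Ramsey arguments. Put $t:=-\lambda\in[1,2)$. Since $\lambda_{\min}(G,\sigma)\ge\lambda$, the matrix $A+tI$ is positive semidefinite. Hence there are vectors $v_x$, $x\in V(G)$, with $\langle v_x,v_x\rangle=t$ and $\langle v_x,v_y\rangle=A_{xy}\in\{0,\pm1\}$. Two small forbidden configurations follow at once.
\begin{itemize}
\item An induced $K_{1,4}$ (with any signing) has smallest eigenvalue $-2<\lambda$. So every neighbourhood has independence number at most $3$ in the underlying graph.
\item An unbalanced triangle is switching equivalent to $-(J-I)$, which has eigenvalue $-2$. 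So every triangle of $(G,\sigma)$ is balanced.
\end{itemize}

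\textbf{Step 1 (big positive cliques).} Fix a vertex $x$. Its valency is at least $\kappa_1(\lambda)$ and $\alpha(N(x))\le 3$, so Ramsey's theorem gives a clique $C\ni x$ of size $m$, where $m\to\infty$ as $\kappa_1\to\infty$. All triangles in $C$ are balanced, so after switching every edge inside $C$ is $+$. Then the vectors $v_c$, $c\in C$, have pairwise inner product $1$. Their centroid $u_C$ satisfies $\|u_C\|^2=1+(t-1)/m$. Each $v_c-u_C$ is orthogonal to $u_C$, up to an error $O(1/m)$, and has norm about $\sqrt{t-1}<1$.

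\textbf{Step 2 (projections onto the clique direction).} Let $z$ be any vertex. Bessel's inequality applied to the nearly orthogonal vectors $v_c-u_C$ shows that $\langle v_z,v_c\rangle$ takes one common value $a\in\{0,\pm1\}$ for all but a bounded number of $c\in C$. Hence $\langle v_z,u_C\rangle\approx a$. If $a=\pm1$, switch $z$ so that $a=1$. Then $v_z=u+w_z$ with $u$ the normalised limit direction and $\|w_z\|^2\approx t-1<1$.

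For two vertices $z,z'$ that both have $a=1$, we get $\langle v_z,v_{z'}\rangle=1+\langle w_z,w_{z'}\rangle$, and $|\langle w_z,w_{z'}\rangle|\le t-1<1$. This lies in $(0,2)$ and is an integer, so it equals $1$. Thus $z,z'$ are adjacent with sign $+$.

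\textbf{Step 3 (globalisation).} Let $S$ be the set of vertices with $a=\pm1$ relative to $C$; after switching, $S$ induces an all-positive complete graph. It remains to show $S=V(G)$.

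Suppose some vertex $y$ is adjacent to $S$ but has $a=0$. Step 1 applied to $y$ gives its own large clique $C'$ with limit direction $u'$. A vertex lying in both a large part of $C$ and a large part of $C'$ (which exists because $y$'s neighbourhood has large valency and bounded independence number) forces $|\langle u,u'\rangle|\approx1$, so $u'=\pm u$. This contradicts $a=0$. Connectedness then gives $S=V(G)$, so $(G,\sigma)$ is switching equivalent to the all-positive complete graph, with $\lambda_{\min}=-1$.

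\textbf{Main obstacle.} The expected difficulty is this last step: making the approximations uniform, with errors controlled by $m$ and by the gap $2-t>0$, which is where $\kappa_1$ depends on $\lambda$. The key point is showing that the neighbourhood of a boundary vertex $y$ shares many vertices with $C$. I would first try to get this from the bound $\alpha\le3$ together with a counting argument on the Gram vectors. Failing that, I would use a Hoffman-type limit argument, treating $u$ as a fat ``Hoffman vertex''.
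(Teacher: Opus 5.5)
The paper does not prove this statement; it quotes it from Gavrilyuk--Munemasa--Sano--Taniguchi, whose argument, like the paper's own treatment of $-3$, goes through fat Hoffman signed graphs. Your proposal therefore has to stand on its own. Steps 1 and 2 are sound, and the centroid directions $u_C$ serve as a hands-on substitute for fat vertices. In fact $v_c-u_C\perp u_C$ holds exactly, and $\{v_c-u_C\}$ has frame bound $t-1$, which keeps the Bessel step clean.

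The gap is in Step 3, and it has two parts.

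First, a vertex adjacent to large parts of both $C$ and $C'$ need not exist under your stated hypotheses (large valency and $\alpha(N(y))\le 3$). Take two all-positive copies of $K_n$ joined by a single edge. This graph is connected, has minimum valency $n-1$, all triangles balanced, and every neighbourhood of independence number at most $2$. Yet no vertex sees more than one vertex of the other clique. The graph is excluded only because its smallest eigenvalue tends to $-2$, so the contradiction must use $t<2$ quantitatively; a counting argument cannot supply it. Likewise, your proposed ``key point'' that $N(y)$ meets $C$ in many vertices is simply the negation of $a_y=0$, not an intermediate lemma.

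Second, even granting such a vertex $w$, the conditions $|\langle v_w,u\rangle|\approx|\langle v_w,u'\rangle|\approx 1$ and $\|v_w\|^2=t$ only give $|\langle u,u'\rangle|\gtrsim 2/t-1$. This is close to $0$ when $t$ is close to $2$, so it does not force $u'\approx\pm u$.

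\textbf{The missing ingredient} is a dichotomy for clique directions. Once you have it, argue directly with the edge $ys$, $s\in S$, rather than through an auxiliary vertex.

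\emph{The dichotomy.} Let $C,C'$ be large positive cliques of sizes $m,m'$.
\begin{enumerate}[label=\rm{(\roman*)}]
\item Apply Step 2 to each $c'\in C'$: then $\langle u_C,v_{c'}\rangle$ is within $O(1/m)$ of an integer.
\item Apply Bessel to $u_C$ (squared norm $\approx 1$) against $\{v_{c'}-u_{C'}\}$ (frame bound $t-1<1$): almost all of these near-integers coincide.
\end{enumerate}
Hence $\langle u,u'\rangle$ is within $O(1/m+1/m')$ of $0$ or $\pm1$.

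\emph{Using it.} There are two cases.
\begin{enumerate}[label=\rm{(\roman*)}]
\item If $\langle u,u'\rangle\approx\pm1$, then since $y\in C'$ gives $\langle v_y,u'\rangle\approx 1$, you get $|\langle v_y,u\rangle|\approx 1$. This contradicts $a_y=0$.
\item If $\langle u,u'\rangle\approx 0$, then $\langle v_s,u'\rangle\approx 0$, since otherwise $\|v_s\|^2\gtrsim 2>t$. Write $v_y=p_yu+q_yu'+r_y$ and $v_s=p_su+q_su'+r_s$ with $r_y,r_s\perp u,u'$. Then $p_y\approx 0$, $q_y\approx 1$, $p_s\approx 1$, $q_s\approx 0$, and $\|r_y\|^2,\|r_s\|^2\approx t-1$. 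So $\langle v_y,v_s\rangle\approx\langle r_y,r_s\rangle$ has absolute value at most about $t-1<1$. It must therefore be $0$, contradicting $y\sim s$.
\end{enumerate}
Taking $m$ large compared with $1/(2-t)$ absorbs all the error terms; this is where $\kappa_1$ depends on $\lambda$. The lemma is exactly the Hoffman-graph fact that, above $-2$, every slim vertex has exactly one fat neighbour and slim vertices with different fat neighbours are non-adjacent. With it inserted, your outline becomes a complete proof.
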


Recently, Koolen, Liu, Yang and Cao \cite{koolen2026a} generalized this result to the half-open interval $(-1-\sqrt{2},-1]$. Before stating the result, we introduce the lattice associated with a signed graph.\par

Let $(G,\sigma)$ be a signed graph with smallest eigenvalue $\lambda$. Define $\operatorname*{Gr}(G,\sigma):=A(G,\sigma)-\lfloor\lambda\rfloor\mathbf{I}$. Since $\operatorname*{Gr}(G,\sigma)$ is positive semidefinite, there exists a matrix \(N\) such that \(\operatorname*{Gr}(G,\sigma)=N^{T}N\). 
The integral lattice generated by the columns of \(N\) is called the \emph{lattice associated with signed graph \((G,\sigma)\)} and is denoted by \(\Lambda(G,\sigma)\). Since any two such factorizations of \(\operatorname*{Gr}(G,\sigma)\) yield isomorphic lattices, the isomorphism class of \(\Lambda(G,\sigma)\) is uniquely determined by \(\operatorname*{Gr}(G,\sigma)\). For further background on lattices, see Section \ref{subsec lattices}.\par
 
In terms of the lattice associated with signed graphs, the theorem can be stated as follows.

\begin{theorem}[{\cite[Theorem $1.6$]{koolen2026a}}]\label{KLYC result on -1-sqrt2}
Let $\lambda$ be a real number inside the interval $(-1-\sqrt{2},-1]$. There exists a positive integer $\kappa_2(\lambda)$ such that, if a connected signed graph $(G,\sigma)$ has smallest eigenvalue $\lambda_{\min}(G,\sigma)\geq \lambda$ and minimum valency at least $\kappa_2(\lambda)$, then $\lambda_{\min}(G,\sigma)\geq-2$ and the lattice $\Lambda(G,\sigma)$ is $A_n$ or $D_n$ for some positive integer $n$.
\end{theorem}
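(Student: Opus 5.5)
Theorem~\ref{KLYC result on -1-sqrt2} is quoted from \cite{koolen2026a}, so I plan to reconstruct its proof through the Hoffman-type limiting argument combined with root-lattice classification.

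\textbf{Step 1: the lattice is a root lattice once $\lambda_{\min}\ge -2$.} Suppose we already know $\lambda_{\min}(G,\sigma)\ge -2$. Then $\operatorname*{Gr}(G,\sigma)=A+2\mathbf{I}$ is positive semidefinite. Each column vector of $N$ has squared norm $2$, and the inner products lie in $\{0,\pm1\}$. Hence $\Lambda(G,\sigma)$ is an integral lattice generated by norm-$2$ vectors, that is, a root lattice. It therefore decomposes as an orthogonal sum of copies of $A_n$, $D_n$ and $E_6,E_7,E_8$.

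Connectedness of $G$ makes the generating root set connected under nonorthogonality, so the lattice is irreducible. It remains to exclude $E_6,E_7,E_8$. Because $E_8$ contains only finitely many roots (240), any signed graph represented in $E_k$ with $k\le 8$ has bounded order up to the identification $x\sim -x$ (two vertices represented by the same or opposite root would give a norm-$0$ or norm-$4$ difference). A bounded order contradicts a minimum valency larger than $120$. So for $\kappa_2$ large, $\Lambda(G,\sigma)$ is $A_n$ or $D_n$.

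\textbf{Step 2: large minimum valency forces $\lambda_{\min}\ge-2$.} This is the main obstacle. I would argue by contradiction using Hoffman signed graphs and the associated limit theory from \cite{Gavrilyuk2021signed,koolen2026a}. Take a sequence of connected signed graphs with $\lambda_{\min}\ge\lambda>-1-\sqrt2$, minimum valency tending to infinity, but $\lambda_{\min}<-2$. By Ramsey-type arguments, the high valency produces large signed cliques, which after switching are all-positive. Every vertex is then associated with its "quasi-cliques," and taking limits gives a Hoffman signed graph, that is, a fat signed graph whose smallest eigenvalue is at least $\lambda$.

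The key lemma is a classification: every fat Hoffman signed graph with smallest eigenvalue greater than $-1-\sqrt2$ has smallest eigenvalue at least $-2$. More concretely, every slim vertex lies in at most two fat cliques, with prescribed sign patterns. One proves this by exhibiting finite forbidden configurations. A slim vertex in three fat cliques, or certain signed adjacencies between two slim vertices sharing fat neighbours, each give a limit eigenvalue $\le -1-\sqrt2$. These are checked via the special matrix of the Hoffman signed graph, with $-1-\sqrt2$ arising as the smallest eigenvalue of a small $2\times2$ or $3\times3$ quotient.

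\textbf{Step 3: transfer back to the finite graph.} With the local structure in hand, each vertex $x$ has a representation $\mathbf e_i\pm\mathbf e_j$, or $2\mathbf e_i$ in the $D$-type case, indexed by its at most two quasi-cliques. This writes $A+2\mathbf{I}=N^TN$ with integer $N$, so $\lambda_{\min}\ge-2$ for all sufficiently large minimum valency. That contradicts the sequence and yields $\kappa_2(\lambda)$, and Step 1 then completes the proof. The delicate part is making the forbidden-subgraph bounds uniform in $\lambda$ near $-1-\sqrt2$. The interval is half-open exactly because $-1-\sqrt2$ itself is attained by such limit configurations.
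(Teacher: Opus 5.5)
The paper quotes this theorem from \cite{koolen2026a} without proof. The relevant comparison is its proof of Theorem~\ref{main result} via Theorems~\ref{min forbidden finite for -2} and~\ref{existence of fat HFSG for -3}, which is the same scheme at level $-3$. Your outline has that architecture: Theorem~\ref{existence of fat HSG}, a finite list of minimal forbidden fat Hoffman signed graphs, and Theorem~\ref{Hoffman-Ostrowski Theorem of signed case}. Your lattice step (Step~1) is a genuinely different and valid finish. The template would use Lemma~\ref{reduced representation lemma} to get $\Lambda(\mathfrak g,2)=\Lambda^{\rm red}(\mathfrak g,2)\oplus\mathbb Z^{|V_f(\mathfrak g)|}$, where the reduced vectors have squared norm $0$ or $1$. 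Then $\Lambda(G,\sigma)$ is an irreducible root lattice inside some $\mathbb Z^n$, hence $A_m$ or $D_m$. You instead observe that distinct vertices give distinct roots up to sign, so an $E$-type lattice allows at most $120$ vertices. Your count is shorter; the embedding is what carries over to level $-3$, where no such count is available.

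Step~2 is where the proof actually lives, and as written it is mis-ordered and unproved. There is no limit object whose smallest eigenvalue is automatically at least $\lambda$. Theorem~\ref{existence of fat HSG} controls only induced Hoffman subgraphs $\mathfrak h$ of $\mathfrak g$ of bounded size. Theorem~\ref{Hoffman-Ostrowski Theorem of signed case} gives $\lambda_{\min}(G(\mathfrak h,p))\ge\lambda_{\min}(\mathfrak h)$, so knowing $\lambda_{\min}(G(\mathfrak h,p))\ge\lambda$ at a fixed $p$ tells you nothing about $\lambda_{\min}(\mathfrak h)$. The limit can only be exploited for a fixed finite list of $\mathfrak h$ with $\lambda_{\min}(\mathfrak h)<\lambda$.

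So you cannot first obtain $\lambda_{\min}(\mathfrak g)\ge\lambda$ and then apply your key lemma. You must argue directly at level $-2$, as the paper does with $\mathcal F(-3)$ and $\Delta$. You need that the members of $\mathcal F(-2)$ have bounded size and smallest eigenvalue at most $-1-\sqrt2$. Then choose $p$ with $\lambda_{\min}(G(\mathfrak f,p))<\lambda$ for every $\mathfrak f\in\mathcal F(-2)$. If $\lambda_{\min}(\mathfrak g)<-2$, some such $\mathfrak f$ sits in $\mathfrak g$, so $G(\mathfrak f,p)$ is induced in $(G,\sigma)$, contradicting $\lambda_{\min}(G,\sigma)\ge\lambda$ by interlacing.

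This classification is short. Put $M=S(\mathfrak g)+2\mathbf I$. Then $M$ is positive semidefinite iff the following hold:
\begin{itemize}
\item every slim vertex has at most two fat neighbours;
\item every row indexed by a vertex with two fat neighbours vanishes;
\item on the vertices with one fat neighbour, $M$ has unit diagonal, off-diagonal entries in $\{0,\pm1\}$, and nonzero pattern a disjoint union of balanced signed cliques.
\end{itemize}
Hence every $\mathfrak f\in\mathcal F(-2)$ has at most three slim vertices. Its special matrix is one of: $(-3)$; a $2\times2$ matrix with smallest eigenvalue at most $\frac{-3-\sqrt5}{2}$; or $-\mathbf I+B$, where $B$ is the adjacency matrix of an unbalanced signed triangle or of a signed path on three vertices, giving smallest eigenvalues $-3$ and $-1-\sqrt2$ respectively. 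So the analogue of $\Delta$ is $\sqrt2-1$, attained only by the path, which is why $-1-\sqrt2$ is excluded. No uniformity in $\lambda$ is needed: $\kappa_2(\lambda)$ may blow up as $\lambda\downarrow-1-\sqrt2$.

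Two smaller points. First, in Step~3, $2\mathbf e_i$ has squared norm $4$ and cannot be a column of $N$. Also, a vertex with one fat neighbour needs a second coordinate from the reduced representation, not from a quasi-clique. The step is unnecessary anyway: $A(G,\sigma)=S(\mathfrak g)+CC^T$ gives $\lambda_{\min}(G,\sigma)\ge\lambda_{\min}(\mathfrak g)\ge-2$ directly, as in the proof of Theorem~\ref{main result}. Second, in Step~1, under the paper's definition $\operatorname*{Gr}(G,\sigma)=A-\lfloor\lambda_{\min}\rfloor\mathbf I$, the identity $\operatorname*{Gr}=A+2\mathbf I$ requires $\lambda_{\min}<-1$. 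When $\lambda_{\min}=-1$ the graph switches to $(K_m,+)$, $\operatorname*{Gr}$ is the all-ones matrix, and the lattice is $\mathbb Z$. The statement silently ignores this degenerate case, and you should set it aside explicitly.
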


In this paper, we obtain a substantial strengthening of the above results as follows. 

\begin{theorem}\label{main result}
There exist a positive real number $\varepsilon$ and a positive integer $\kappa_3$ such that, if a connected signed graph $(G,\sigma)$ has smallest eigenvalue $\lambda_{\min}(G,\sigma)>-3-\varepsilon$ and minimum valency at least $\kappa_3$, then $\lambda_{\min}(G,\sigma)\geq -3$ and the lattice $\Lambda(G,\sigma)$ is a sublattice of a direct sum of the standard lattice $\mathbb{Z}^n$ and copies of the root lattice $E_8$. 
\end{theorem}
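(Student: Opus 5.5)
The plan follows the Hoffman-graph approach used for Theorem \ref{GMST result on -2} and Theorem \ref{KLYC result on -1-sqrt2}, adapted to signed graphs. It has two halves: first obtain the eigenvalue bound $\lambda_{\min}\ge -3$, then identify the lattice.

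\textbf{Step 1: reduction to a Hoffman signed graph.} Fix $\varepsilon$ small. Since $\lambda_{\min}(G,\sigma)>-3-\varepsilon$, Ramsey-type arguments bound the size of induced signed claws and similar configurations. Combined with large minimum valency, these bounds show that every vertex lies in a bounded number of large "quasi-cliques" (after switching, large cliques with all edges $+$). I will replace each such maximal large clique by a fat vertex. This represents $(G,\sigma)$ as the slim part of a fat Hoffman signed graph $\mathfrak{h}$ in which every slim vertex has at most $3$ fat neighbours, counted with multiplicity, and where $\lambda_{\min}(\mathfrak h)\ge\lambda_{\min}(G,\sigma)-o(1)$.

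\textbf{Step 2: the bound $\lambda_{\min}\ge -3$.} Decompose $\mathfrak h$ into indecomposable fat Hoffman signed graphs. The main finite step is a classification of the indecomposable fat Hoffman signed graphs with smallest eigenvalue greater than $-3-\varepsilon$. Each such graph has only finitely many "local" structures, because every slim vertex has at most three fat neighbours and its slim neighbourhood is controlled. For each of them I will verify the special matrix condition $\lambda_{\min}\ge -3$. The key input is that $-3$ is an isolated limit point from below for these finite families, so choosing $\varepsilon$ small excludes every family with smallest eigenvalue strictly between $-3-\varepsilon$ and $-3$. Since the smallest eigenvalue of $G$ is at least that of $\mathfrak h$, this gives $\lambda_{\min}(G,\sigma)\ge-3$.

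I expect this classification to be the main obstacle. It requires a careful enumeration of slim graphs attached to up to three fat vertices with signs. My first attempt would be to use the representation of a Hoffman signed graph by its reduced matrix $-B-\ldots$, and to mirror the known unsigned classification for $-3$.

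\textbf{Step 3: the lattice.} Now $\operatorname{Gr}(G,\sigma)=A+3\mathbf I$, so $\Lambda(G,\sigma)$ is generated by norm-$3$ vectors, with inner products in $\{0,\pm1\}$. Each fat vertex contributes a coordinate: a vertex $x$ with fat neighbours $f_1,f_2,f_3$ is represented as $\pm e_{f_1}\pm e_{f_2}\pm e_{f_3}$ plus a correction. The correction comes from the reduced representation, whose Gram matrix is the special matrix of $\mathfrak h$, and it has norm $3$ minus the number of fat neighbours.

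The special-matrix part is positive semidefinite with smallest eigenvalue at least $-3$, and its lattice is generated by vectors of norm at most $3$. From the Step 2 classification, the pieces that are not already in $\mathbb Z^m$ are root systems. Positive definite integral lattices generated by such small vectors that do not embed in $\mathbb Z^k$ must involve $E_8$; here I will use that $E_6$ and $E_7$ embed in $E_8$, and that $A_n$ and $D_n$ embed in $\mathbb Z^{n+1}$ and $\mathbb Z^n$ respectively. Gluing along the fat coordinates then embeds $\Lambda(G,\sigma)$ into $\mathbb Z^n\oplus E_8^{k}$.

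Throughout, connectivity is used only to keep a single component and a single ambient embedding.
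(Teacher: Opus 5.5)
Your Steps 1--2 do not establish $\lambda_{\min}(G,\sigma)\ge -3$, and this is a genuine gap. The Hoffman structure you sketch in Step 1 is what the paper imports as Theorem \ref{existence of fat HSG} from \cite{koolen2026a}. It gives only \emph{local} information. For an induced Hoffman subgraph $\mathfrak h$ of the fat Hoffman signed graph $\mathfrak g$ with a \emph{bounded} number of vertices, the blow-up $G(\mathfrak h,p)$ is an induced subgraph of $(G,\sigma)$. By interlacing and Theorem \ref{Hoffman-Ostrowski Theorem of signed case}, such small pieces therefore have smallest eigenvalue essentially at least $\lambda_{\min}(G,\sigma)$.

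It does not give the global inequality $\lambda_{\min}(\mathfrak g)\ge\lambda_{\min}(G,\sigma)-o(1)$ that you assert. Since $S(\mathfrak g)=A(G,\sigma)-CC^T$, only $\lambda_{\min}(\mathfrak g)\le\lambda_{\min}(G,\sigma)$ is automatic, and an eigenvector of $S(\mathfrak g)$ below $-3$ may be spread over unboundedly many vertices. Step 2 cannot repair this. Fat Hoffman signed graphs with $\lambda_{\min}>-3-\varepsilon$ form an infinite family of unbounded size; a slim vertex can even have unboundedly many nonzero entries in its row of $S(\mathfrak g)$. So there is no finite classification to run, and control of ``local structures'' says nothing about a global eigenvalue. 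The isolation of $-3$ from below that you call the key input is precisely what needs proof.

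The missing idea is a bounded-witness (local-to-global) result: the set $\mathcal F(-3)$ of minimal forbidden fat Hoffman signed graphs is finite. The paper proves this from $S(\mathfrak f)_{xx}=-(\text{number of fat neighbours of }x)$:
\begin{itemize}
\item If some slim vertex has at least three fat neighbours, minimality forces $S(\mathfrak f)$ to have order at most $2$.
\item Otherwise $S(\mathfrak f)+\mathbf I$ is an integral symmetric matrix with diagonal entries in $\{0,-1\}$ and smallest eigenvalue below $-2$. Theorem \ref{kyy: min forbidden finite for -2} (Koolen--Yang--Yang) then bounds its order by $10$.
\end{itemize}
Hence $\Delta=\min_{\mathfrak f\in\mathcal F(-3)}(-3-\lambda_{\min}(\mathfrak f))$ is a minimum over a finite set and is positive. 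If $\lambda_{\min}(\mathfrak g)<-3$, then $\mathfrak g$ contains some $\mathfrak f\in\mathcal F(-3)$ with at most $10$ slim and $20$ fat vertices. Applying Theorem \ref{existence of fat HSG} with these bounds and $p$ large, $G(\mathfrak f,p)$ is an induced subgraph of $(G,\sigma)$ whose smallest eigenvalue is as close as desired to $\lambda_{\min}(\mathfrak f)\le-3-\Delta$. This contradicts $\lambda_{\min}(G,\sigma)>-3-\varepsilon$ for $\varepsilon$ chosen in terms of $\Delta$. The argument produces a fat $\mathfrak g$ with $\lambda_{\min}(\mathfrak g)\ge-3$ without any classification.

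Your Step 3 then works, but it also needs no classification. Fatness gives every reduced vector squared norm $3-(\text{number of fat neighbours})\le 2$, not ``at most $3$''. With norm-$3$ generators the conclusion would fail, e.g.\ for the shorter Leech lattice. So $\Lambda^{\rm{red}}(\mathfrak g,3)$ is an orthogonal sum of $\mathbb Z^k$ and a root lattice. Lemma \ref{reduced representation lemma}, together with $A_n\subset\mathbb Z^{n+1}$, $D_n\subset\mathbb Z^n$ and $E_6,E_7\subset E_8$, finishes the proof. Note also that the reduced Gram matrix is $S(\mathfrak g)+3\mathbf I$, not $S(\mathfrak g)$.
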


In the following subsection, a refinement of Theorem \ref{main result} is presented for connected signed graphs with smallest eigenvalue at least $-3$.

\subsection{Maximal and non-extendable signed graphs}

This subsection is concerned with maximal and non-extendable signed graphs with smallest eigenvalue at least $-3$. We will explore such signed graphs from the lattice viewpoint.\par

\begin{definition}
Let $(G,\sigma)$ be a connected signed graph with smallest eigenvalue $\lambda$. The signed graph $(G,\sigma)$ is \emph{maximal}, if there is no connected signed graph $(G',\sigma')$ satisfying the following conditions $\rm{(i)}$, $\rm{(ii)}$ and $\rm{(iii)}$, and is \emph{non-extendable}, if there is no connected signed graph $(G',\sigma')$ satisfying the following conditions $\rm{(i)}$ and $\rm{(ii)}$.
\begin{enumerate}[label=\rm{(\roman*)}]
    \item The signed graph $(G,\sigma)$ is a proper induced subgraph of $(G',\sigma')$.
    \item The smallest eigenvalue of $(G',\sigma')$ is at least $\lfloor\lambda\rfloor$.
    \item $\rk(A(G',\sigma')-\lfloor\lambda\rfloor\mathbf{I})=\rk(A(G,\sigma)-\lfloor\lambda\rfloor\mathbf{I})$.
\end{enumerate}
\end{definition}
A connected signed graph is said to be \emph{extendable}, if it is not non-extendable. By definition, whether a signed graph is maximal or non-extendable depends only on its switching class. In particular, if \((G,\sigma)\) is maximal (resp. non-extendable), then every signed graph in \([G,\sigma]\) is maximal (resp. non-extendable).\par

For a connected signed graph $(G,\sigma)$ with smallest eigenvalue $\lambda$, if $(G,\sigma)$ is extendable, then it is a proper induced subgraph of a connected signed graph $(G',\sigma')$ with smallest eigenvalue at least $\lfloor\lambda\rfloor$, and the lattice $\Lambda(G,\sigma)$ can be embedded in a larger lattice $\Lambda(G',\sigma')$. If $\Lambda(G',\sigma')$ has the same dimension as $\Lambda(G,\sigma)$, then $(G,\sigma)$ is not maximal.\par

First, we look at maximal (non-extendable) signed graphs with smallest eigenvalue at least $-2$. In 2018, Belardo, Cioab\u{a}, Koolen and Wang \cite{belardo2018open} observed the following result.\par 

\begin{theorem}[{\cite[Theorem $3.13$]{belardo2018open}}]\label{BCKW result on -2}
If $(G,\sigma)$ is a connected signed graph whose smallest eigenvalue lies inside $[-2,-1)$, then the lattice $\Lambda(G,\sigma)$ associated with $(G,\sigma)$ is one of the root lattices $A_n,D_n,E_6,E_7$ and $E_8$.
\end{theorem}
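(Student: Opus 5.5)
The plan is to realize $(G,\sigma)$ by vectors and then recognize the lattice they generate as a root lattice. Let $\lambda=\lambda_{\min}(G,\sigma)\in[-2,-1)$, so $\lfloor\lambda\rfloor=-2$ and $\operatorname*{Gr}(G,\sigma)=A(G,\sigma)+2\mathbf{I}$ is positive semidefinite. Factor it as $N^TN$ and let $v_x$ ($x\in V(G)$) be the columns of $N$. Then $\langle v_x,v_x\rangle=2$ for every $x$, and $\langle v_x,v_y\rangle=A_{xy}\in\{0,\pm1\}$ for $x\neq y$. Hence $\Lambda(G,\sigma)$ is an integral lattice generated by norm $2$ vectors, with all inner products integers.

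\textbf{Key steps, in order.}
\begin{enumerate}
\item Show that $\Lambda(G,\sigma)$ is a root lattice. The set $R$ of all norm $2$ vectors in an integral lattice is a root system: for $r,s\in R$ the reflection $s_r(s)=s-\langle r,s\rangle r$ again has norm $2$ and lies in the lattice. Since the lattice is generated by $R$, it is a root lattice. By Witt's classification it is then an orthogonal direct sum of irreducible root lattices of types $A_n$, $D_n$, $E_6$, $E_7$, $E_8$.
\item Show that this sum has only one summand. Since $G$ is connected, any two vertices are joined by a path. Along an edge $xy$ we have $\langle v_x,v_y\rangle=\pm1\neq0$. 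Each norm $2$ vector $v_x$ lies in a single orthogonal summand, because a root splits into components in different summands, each component has norm at least $2$, and the total norm is only $2$. Two roots with nonzero inner product must therefore lie in the same summand. By connectivity, all $v_x$ lie in one summand, and the generated lattice is irreducible.
\item Conclude that $\Lambda(G,\sigma)$ is one of $A_n$, $D_n$, $E_6$, $E_7$, $E_8$.
\end{enumerate}

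\textbf{Main obstacle.} The only nontrivial input is the classification of root lattices: integral lattices generated by norm $2$ vectors decompose as orthogonal sums of ADE lattices. I would cite this as a standard fact, for example from Conway--Sloane or Ebeling, rather than reprove it.

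One caveat needs care. The hypothesis $\lambda<-1$ ensures that $\operatorname*{Gr}(G,\sigma)$ is singular enough to be meaningful, but it is not used essentially beyond giving $\lfloor\lambda\rfloor=-2$. The hypothesis also rules out $\lambda=-1$, where $\lfloor\lambda\rfloor=-1$ and the definition of $\Lambda(G,\sigma)$ would change.
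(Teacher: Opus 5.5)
The paper does not prove this statement. It quotes it from Belardo, Cioab\u{a}, Koolen and Wang, so there is no internal argument to compare against. Your proof is correct and is the standard lattice argument. From $A(G,\sigma)+2\mathbf{I}=N^TN$ you get norm $2$ generators with inner products in $\{0,\pm1\}$, so $\Lambda(G,\sigma)$ is an even integral lattice generated by roots. Connectivity then forces irreducibility, and Witt's classification finishes.

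Two small remarks.

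\emph{Step 1 and the decomposition.} Step 1 holds by definition, since the paper defines a root lattice as an integral lattice generated by norm $2$ vectors. You also do not need the decomposition of a root lattice into irreducible ADE summands, which goes slightly beyond the version of Witt's theorem stated in the paper. Your Step 2 argument already works for an arbitrary orthogonal splitting $\Lambda(G,\sigma)=L_1\oplus L_2$:
\begin{enumerate}[label=\rm{(\roman*)}]
\item Each $L_i\subseteq\Lambda(G,\sigma)$ is even, so every nonzero component of a root has norm at least $2$, and a norm $2$ generator has exactly one nonzero component.
\item Adjacent generators must share a summand, and connectivity then puts all generators in $L_1$, say.
\item Hence $L_2\subseteq\Lambda(G,\sigma)\subseteq L_1$ while $L_2\perp L_1$, so $L_2=\{\mathbf{0}\}$.
\end{enumerate}
So $\Lambda(G,\sigma)$ is irreducible, and Theorem~\ref{irreducible root lattices} applies directly.

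\emph{Your closing caveat.} It is slightly off. For $\lambda\in(-2,-1)$ the matrix $A(G,\sigma)+2\mathbf{I}$ is positive definite, not singular. The hypothesis $\lambda\in[-2,-1)$ matters only because it gives $\lfloor\lambda\rfloor=-2$, so that $A(G,\sigma)+2\mathbf{I}$ is positive semidefinite and the generators have norm $2$ rather than $1$.
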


\begin{remark}
    Let $(G,\sigma)$ be a connected signed graph with smallest eigenvalue $\lambda$. 
    \begin{enumerate}[label=\rm{(\roman*)}]
        \item If $\lambda=-2$, then $\Lambda(G,\sigma)$ is generated by a set of roots, that is, squared norm $2$ vectors, and thus $\Lambda(G,\sigma)$ is even and irreducible.
        \item If $\lambda<-2$, the connectedness of $(G,\sigma)$ does not guarantee that $\Lambda(G,\sigma)$ is irreducible; indeed, $\Lambda(G,\sigma)$ may even contain a root or a unit vector.
    \end{enumerate}
\end{remark}
The following gives a way to construct signed graphs from a given integral lattice. Let $\Lambda$ be an integral lattice with minimum squared norm $m\geq 2$, that is, $m=\min\{(\mathbf{x},\mathbf{x})\mid \mathbf{x}\in \Lambda,\mathbf{x}\neq\mathbf{0}\}$. A vector with norm $m$ is called a \emph{minimal vector} of $\Lambda$, and the set of minimal vectors is denoted by $\mathrm{Min}(\Lambda)$. If $S$ is a subset of $\rm{Min}(\Lambda)$ such that $(\mathbf{x},\mathbf{y})\in\{0,\pm1\}$ for any distinct pair $\mathbf{x}$ and $\mathbf{y}$ in $S$, then one may define a signed graph $(G,\sigma)_S$ as follows. The vertex set of $G$ is $S$, and two distinct vertices $\mathbf{x},\mathbf{y}\in S$ are adjacent whenever $(\mathbf{x},\mathbf{y})\neq 0$. For each edge $\{\mathbf{x},\mathbf{y}\}\in E(G)$, define
\[
\sigma(\{\mathbf{x},\mathbf{y}\})=
\begin{cases}
+,& \text{if }(\mathbf{x},\mathbf{y})=1,\\[2mm]
-,& \text{if }(\mathbf{x},\mathbf{y})=-1.
\end{cases}
\]\par

Let $\Lambda$ be an irreducible root lattice, that is, $\Lambda$ is one of $A_n,D_n,E_6,E_7$ and $E_8$, and let $S$ be a subset of $\mathrm{Min}(\Lambda)$ consisting of one representative from each pair $\{\mathbf{x},-\mathbf{x}\}$. The resulting signed graph $(G,\sigma)_S$ is connected with smallest eigenvalue $-2$. In addition,
\begin{enumerate}[label=\rm{(\roman*)}]
    \item  If $\Lambda$ is $A_n$, then the signed graph $(G,\sigma)_S$ has order $\frac{n(n+1)}{2}$ and valency $2n-2$.
    \item  If $\Lambda$ is $D_n$, then the signed graph $(G,\sigma)_S$ has order $n(n-1)$ and valency $4n-8$.
    \item If $\Lambda$ is $E_6$, then the signed graph $(G,\sigma)_S$ has order $36$ and valency $20$; if $\Lambda$ is $E_7$, then the signed graph $(G,\sigma)_S$ has order $63$ and valency $32$; and if $\Lambda$ is $E_8$, then the signed graph $(G,\sigma)_S$ has order $120$ and valency $56$.
\end{enumerate}
Moreover, $(G,\sigma)_S$ is maximal unless $\Lambda=A_7, A_8$ or $D_8$, and  $(G,\sigma)_S$ is non-extendable if and only if $\Lambda=E_8$. Indeed, there is exactly one switching class of non-extendable signed graph with smallest eigenvalue at least $-2$. \par

The discussion now turns to signed graphs with smallest eigenvalue in $[-3,-2)$. We extend 
Theorem \ref{main result} as follows.\par

\begin{theorem}\label{main result 2}
    There exists a positive integer $\kappa_4$ such that, if a connected signed graph $(G,\sigma)$ has smallest eigenvalue at least $-3$ and minimum valency at least $\kappa_4$, then the lattice $\Lambda(G,\sigma)$ is a sublattice of a direct sum of the standard lattice $\mathbb{Z}^n$ and copies of the root lattice $E_8$. 
Furthermore, there exist infinitely many connected signed graphs with smallest eigenvalue at least $-3$ containing $(G,\sigma)$ as a proper induced subgraph, and thus $(G,\sigma)$ is extendable.
\end{theorem}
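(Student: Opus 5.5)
The first assertion is immediate from Theorem \ref{main result}. Take $\kappa_4:=\kappa_3$. If $(G,\sigma)$ is connected with $\lambda_{\min}(G,\sigma)\geq -3>-3-\varepsilon$ and minimum valency at least $\kappa_3$, then Theorem \ref{main result} applies directly. It gives that $\Lambda(G,\sigma)$ is a sublattice of $\mathbb{Z}^n\oplus E_8^{k}$ for some $n,k\ge 0$. If $\lambda_{\min}(G,\sigma)\geq-2$ and $\lfloor\lambda\rfloor>-3$, we instead work with $A+3\mathbf{I}$, or note that a large-valency graph falls under Theorem \ref{KLYC result on -1-sqrt2} and its $A_n$ or $D_n$ lattice embeds in $\mathbb{Z}^{n+1}$. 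Either way we fix an isometric embedding $\phi:\Lambda(G,\sigma)\to L:=\mathbb{Z}^n\oplus E_8^k$. The vertices then correspond to vectors $\mathbf{v}_x=\phi(N e_x)\in L$ of squared norm $3$ with pairwise inner products $A_{xy}\in\{0,\pm1\}$.

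For the extension statement, the plan is to enlarge $L$ by adding fresh orthogonal coordinates. For each $m\ge 1$, set $L_m:=\mathbb{Z}^{n+m}\oplus E_8^{k}$, and let $\mathbf{f}_1,\dots,\mathbf{f}_m$ be the new standard basis vectors; these are orthogonal to all $\mathbf{v}_x$. We look for new norm-$3$ vectors $\mathbf{w}\in L_m$ whose inner products with every $\mathbf{v}_x$ and with each other lie in $\{0,\pm1\}$. A Gram matrix of norm-$3$ vectors with off-diagonal entries in $\{0,\pm1\}$ is the matrix $A'+3\mathbf{I}$ of some signed graph. Since it is positive semidefinite, that signed graph has smallest eigenvalue at least $-3$ and contains $(G,\sigma)$ as an induced subgraph.

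Connectivity is the key point. Fix a vertex $x_0$ and write $\mathbf{v}_{x_0}$ in coordinates of $\mathbb{Z}^n\oplus E_8^k$. A norm-$3$ vector is either a $\pm1$ combination of three unit coordinates, or a unit coordinate plus a root of some $E_8$ component. In each case we can find a vector $\mathbf{u}\in L$ of norm $1$ or $2$ with $(\mathbf{u},\mathbf{v}_{x_0})=\pm1$ and $|(\mathbf{u},\mathbf{v}_x)|\le 1$ for all $x$.

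The bound $|(\mathbf{u},\mathbf{v}_x)|\le1$ is the main obstacle. I would first try $\mathbf{u}=\pm\mathbf{e}_i$, a unit coordinate in the support of $\mathbf{v}_{x_0}$. If $\mathbf{v}_{x_0}$ has a $\mathbb{Z}^n$ part, then $|(\mathbf{e}_i,\mathbf{v}_x)|\le|v_{x,i}|$, and this is at most $1$ unless some $\mathbf{v}_x$ has a coordinate $\pm2$. That is impossible for norm $3$ unless $\mathbf{v}_x$ has the form $2\mathbf{e}_i+\dots$, which already has norm $\ge 4$. So unit coordinates always work. If $\mathbf{v}_{x_0}$ lies entirely in a single $E_8$, which is impossible since $E_8$ is even and $3$ is odd, the case does not arise. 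Hence every $\mathbf{v}_{x_0}$ has an odd-norm $\mathbb{Z}^n$ component, and such an $\mathbf{e}_i$ exists.

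Now set $\mathbf{w}_j:=\mathbf{e}_i+\mathbf{f}_{2j-1}+\mathbf{f}_{2j}$ for $j=1,\dots,\lfloor m/2\rfloor$. Each $\mathbf{w}_j$ has norm $3$ and $(\mathbf{w}_j,\mathbf{v}_x)=v_{x,i}\in\{0,\pm1\}$, with this value equal to $\pm1$ for $x=x_0$. Also $(\mathbf{w}_j,\mathbf{w}_{j'})=1$ for $j\neq j'$. Thus each new vertex is adjacent to $x_0$, so the resulting signed graph is connected, has smallest eigenvalue at least $-3$, and properly contains $(G,\sigma)$. Letting $m\to\infty$ produces infinitely many such signed graphs, and hence $(G,\sigma)$ is extendable.

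What remains to check carefully is the reduction to the embedding in $L$. This is supplied by Theorem \ref{main result}, together with the convention on $\lfloor\lambda\rfloor$ when $\lambda_{\min}>-3$, where the Gram matrix should be taken as $A+3\mathbf{I}$.
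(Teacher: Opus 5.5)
Your proposal is correct, but it proves the extension statement by a different route from the paper.

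\textbf{The paper's route.} The paper does not use the lattice conclusion for the second part. It takes the fat Hoffman signed graph $\mathfrak{g}$ with $\lambda_{\min}(\mathfrak{g})\geq-3$ and slim graph $(G,\sigma)$ supplied by Theorem~\ref{existence of fat HFSG for -3}. It then replaces every fat vertex by a positive clique $(K_q,+)$ and applies the Hoffman--Ostrowski bound (Theorem~\ref{Hoffman-Ostrowski Theorem of signed case}) to get $\lambda_{\min}(G(\mathfrak{g},q))\geq-3$ for all $q$.

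\textbf{Your route.} You use the same mechanism, written in lattice language. Your unit vector $\mathbf{e}_i$ is a single fat vertex, and $\{\mathbf{w}_j\}$ is the positive clique replacing it. What differs is where the fat vertex comes from. You extract it from the embedding into $\mathbb{Z}^n\oplus E_8^k$ by parity: the norm $3$ is odd and $E_8$ is even, so $\mathbf{v}_{x_0}$ has a $\pm1$ coordinate in its $\mathbb{Z}^n$ part. Norm $3$ then keeps every $(\mathbf{e}_i,\mathbf{v}_x)$ in $\{0,\pm1\}$. You also replace Hoffman--Ostrowski by positive semidefiniteness of an explicit Gram matrix.

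\textbf{What each buys.} Your version shows the second assertion is a formal consequence of the first, with no valency hypothesis. Any connected signed graph whose norm-$3$ vectors lie in some $\mathbb{Z}^n\oplus E_8^k$ is extendable. This also explains directly why the unimodular lattices of Table~\ref{tab_lattice} cannot embed there. The underlying machinery is still the fat Hoffman graph, since Theorem~\ref{main result} is proved with it. The paper's version avoids any coordinate analysis and attaches cliques at all fat vertices at once.

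\textbf{Points to tighten.} When $\lambda_{\min}(G,\sigma)\geq-2$, the lattice $\Lambda(G,\sigma)$ is generated by vectors of norm $2$ (or $1$). So Theorem~\ref{main result} does not literally give you norm-$3$ vectors with Gram matrix $A+3\mathbf{I}$ inside $\mathbb{Z}^n\oplus E_8^k$, and ``work with $A+3\mathbf{I}$'' needs a sentence of justification. The fix is immediate: append a fresh unit coordinate $\mathbf{g}_x$ to each image vector (two fresh coordinates if its norm is $1$). Then $\mathbf{g}_{x_0}$ itself serves as your $\mathbf{e}_i$. Your alternative via Theorem~\ref{KLYC result on -1-sqrt2} is unnecessary and would force $\kappa_4\geq\kappa_2(-2)$. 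Finally, you need $m\geq2$ for at least one $\mathbf{w}_j$ to exist.
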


Later in this subsection, an example of non-extendable regular signed graph with order $2300$, valency $891$ and smallest eigenvalue $-3$ is presented. This example shows that the constant $\kappa_4$ in Theorem \ref{main result 2} is at least $892$. \par

We now proceed to construct certain non-extendable signed graph with smallest eigenvalue at least $-3$. To construct such signed graphs, the lattices generated by squared norm $3$ vectors without roots will play a central role. As an example, given an irreducible unimodular lattice $\Lambda$ with minimum squared norm $3$ generated by $\rm{Min}(\Lambda)$, choose $S$ as a subset of $\mathrm{Min}(\Lambda)$ consisting of one representative from each pair $\{\mathbf{x},-\mathbf{x}\}$. One can verify that $(\mathbf{x},\mathbf{y})\in\{0,\pm1\}$ for any distinct pair $\mathbf{x}$ and $\mathbf{y}$ in $S$  and the resulting signed graph $(G,\sigma)_S$ is connected with smallest eigenvalue $-3$. Furthermore, since $\Lambda$ is irreducible, unimodular and has minimum squared norm $3$, $(G,\sigma)_S$ is non-extendable and has smallest eigenvalue $-3$ (see Lemma \ref{lattice to non-extendable of sg}).\par

Many irreducible unimodular lattices generated by squared norm $3$ vectors without roots are known. The table below lists all irreducible unimodular lattices without roots up to dimension $29$. 

{\renewcommand{\arraystretch}{1.3}
\begin{table}[H]
    \centering
    \caption{Irreducible unimodular lattices without roots up to dimension $29$}
    \label{tab_lattice}

    \begin{tabular}{c|c|c|c}
        \hline
        Dimension& Minimum squared norm & Kissing number & Number of lattices  \\
        \hline
        $23$ & $3$ & $4600$ & $1$ (the shorter Leech lattice) \\ \hline
        \multirow{2}{*}{$24$} & $3$ & $4096$ & $1$ (the odd Leech lattice )\\ \cline{2-4}
         & $4$  & $196560$ & $1$ (the Leech lattice)\\ \hline
        \multirow{2}{*}{$26$} & \multirow{2}{*}{$3$} & \multirow{2}{*}{$3120$} & $1$ (the Conway-Borcherds  \\ 
        & & & unimodular lattice)\\ \hline
        \multirow{2}{*}{$27$} & $3$ & $2664$ & $2$ \\ \cline{2-4}
        & $3$  & $1640$ & $1$ \\ \hline
       \multirow{2}{*}{$28$}& $3$  & $2240$ & $36$   \\\cline{2-4}
        & $3$  & $1728$ & $2$  \\ \hline
        \multirow{2}{*}{$29$}& $3$  & $1856$ & $9987$  \\\cline{2-4}
        & $3$  & $1600$ & $105$ \\
        \hline
    \end{tabular}
\end{table}
}

\begin{remark}
    \begin{enumerate}[label=\rm{(\roman*)}]
        \item All lattices in Table \ref{tab_lattice}, except the Leech lattice, have minimum squared norm $3$ and are generated by its minimal vectors. For a list of Gram matrices of bases of these lattices, up to dimension $28$, chosen from vectors of minimum squared norm $3$, see \cite{MBperfectlattice}.
        \item Borcherds \cite{borcherdsthesis} classified all irreducible unimodular lattices up to dimension $25$ and irreducible unimodular lattices without roots of dimension $26$. Among them, only three have minimum squared norm $3$ and are generated by its minimal vectors, namely the shorter Leech lattice, the odd Leech lattice, and the Conway-Borcherds unimodular lattice, of dimension $23, 24$, and $26$, respectively. For further details on these three lattices, we refer to \cite{borcherdsthesis,Conway1999Sphere,NSlattice}. 
        \item Bacher and Venkov \cite{Bacher2001reseaux} classified the irreducible unimodular lattices without roots of dimension $27$ and $28$. 
        \item In \cite{AllombertChenevier2025}, Allombert and Chenevier classified the irreducible unimodular lattices without roots of dimension $29$.
        \item There are many irreducible unimodular lattices in dimension $30$ to $33$ with minimum squared norm $3$ \cite{conway1998a,king2003a}, but it is unknown whether any of them is generated by their squared norm $3$ vectors.
    \end{enumerate}
\end{remark}

In what follows, attention will be restricted to the signed graphs constructed from the lattices listed in Table \ref{tab_lattice}, excluding the Leech lattice.  For such a lattice $\Lambda$, let $S$ be a subset of $\mathrm{Min}(\Lambda)$ consisting of exactly one representative from each pair $\{\mathbf{x},-\mathbf{x}\}$. The resulting signed graph $(G,\sigma)_S$ has smallest eigenvalue at least $-3$ and its order is half the number of minimal vectors of $\Lambda$, namely half the kissing number of $\Lambda$.
\begin{enumerate}[label=\rm{(\roman*)}]
    \item  If $\Lambda$ is the shorter Leech Lattice, then the signed graph $(G,\sigma)_S$ has a strongly regular graph with parameter $(2300,891,378,324)$ as its underlying graph.
    \item  If $\Lambda$ is the odd Leech Lattice, then the signed graph $(G,\sigma)_S$ has a strongly regular graph with parameter $(2048,759,310,264)$ as its underlying graph.
    \item If $\Lambda$ is the Conway-Borcherds unimodular lattice, then the signed graph $(G,\sigma)_S$ has order $1560$. If $\Lambda$ is one of the three lattices of dimension $27$, then the signed graph $(G,\sigma)_S$ has order $1332$ or $820$; if $\Lambda$ is one of the $38$ lattices of dimension $28$, then the signed graph $(G,\sigma)_S$ has order $1120$ or $864$; and if $\Lambda$ is one of the $10092$ lattices of dimension $29$, then the signed graph $(G,\sigma)_S$ has order $928$ or $800$.
\end{enumerate}
It is worth mentioning that all these signed graphs are non-extendable.\par

The preceding constructions show that there are many (switching classes of) non-extendable signed graphs with smallest eigenvalue $-3$. This naturally leads to the following problem.
\begin{problem}\label{pro non-extendable of sg}
    Are there infinitely many switching classes of non-extendable signed graphs with smallest eigenvalue at least $-3$?
\end{problem}

In light of Theorem \ref{main result 2}, we expect the answer to Problem \ref{pro non-extendable of sg} to be negative. If this is the case, then it is interesting to classify non-extendable signed graphs with smallest eigenvalue $-3$.\par

Observe that, for a maximal signed graph $(G,\sigma)$ with smallest eigenvalue $-2$, if the dimension of $\Lambda(G,\sigma)$ tends to infinity, then the valency of $(G,\sigma)$ is $2r-2$ or $4r-8$ and also tends to infinity, as $\Lambda(G,\sigma)$ is $A_r$ or $D_r$. We wonder whether an analogous statement holds for maximal signed graphs with smallest eigenvalue $-3$.\par

Our study of signed graphs with fixed smallest eigenvalue is also motivated by considerations arising from certain biangular line systems. A line system $\mathcal{L}$ in the Euclidean space $\mathbb{R}^d$, is called a system of \emph{equiangular lines} if the angle between each pair of lines is the same, and a system of \emph{biangular lines} if the angle between each pair of lines has only two possibilities. It is of particular interest to investigate systems of biangular lines with angles $\arccos \frac{1}{\alpha}$ and $90\degree$, where $\alpha$ is a positive real number. A natural and important problem, posed by Koolen et. al. \cite{koolen2026a}, is to determine the maximum number of biangular lines in $\mathbb{R}^d$ with angles $\arccos\frac{1}{\alpha}$ and $90\degree$. \par

Let $\mathcal{L}$ be a system of $n$ biangular lines in $\mathbb{R}^d$ with angles $\arccos\frac{1}{\alpha}$ and $90\degree$. Represent the lines in $\mathcal{L}$ by unit vectors $\mathbf{u}_1,\ldots\mathbf{u}_n$ and denote by $Gr$ the Gram matrix of these vectors. With this notation, the matrix $\alpha(Gr-\mathbf{I})$ is a symmetric matrix with $0$ on the diagonal and $0$ or $\pm1$ off the diagonal, and can therefore be regarded as the adjacency matrix of a signed graph with smallest eigenvalue at least $-\alpha$, see also \cite{Balla2025equiangular,Koolen2021recent,koolen2026a} for a similar discussion. This connection highlights the significance of studying of signed graphs with fixed smallest eigenvalue in investigating the maximum number of such biangular lines.\par

This paper proceeds as follows. Section \ref{sec def and pre} contains the necessary definitions, notation and properties of lattices and Hoffman signed graphs, and Section \ref{sec of proof} is devoted to the proof of Theorem \ref{main result} and Theorem \ref{main result 2}.

\section{Definitions and preliminaries}\label{sec def and pre}

This section presents the basic definitions, notation, and results that will be used  in the sequel.

\subsection{Lattices}\label{subsec lattices}

Briefly, the lattice-theoretic background needed in this paper is reviewed in this subsection. For more details, see \cite{Conway1999Sphere} and \cite{Ebeling2013lattices}.\par

Let $\mathbb{R}^n$ be an $n$-dimensional Euclidean space, equipped with the canonical \emph{inner product} $(\mathbf{x},\mathbf{y}):=\mathbf{x}^T\mathbf{y}$, where $\mathbf{x}^T$ denotes the transpose of $\mathbf{x}$. The number $||\mathbf{x}||^2=(\mathbf{x},\mathbf{x})$ will be called the \emph{squared norm} of $\mathbf{x}\in\mathbb{R}^n$. A \emph{lattice} $\Lambda$ in $\mathbb{R}^n$ is a discrete additive subgroup of $\mathbb{R}^n$. If the inner product of any two vectors in a lattice $\Lambda$ is integral, then $\Lambda$ is said to be an \emph{integral lattice}. An integral lattice is called \emph{even} if the squared norm of every vector is even.\par 

The \emph{dimension} of a lattice $\Lambda$ is the dimension of the real vector space spanned by $\Lambda$. For an $n$-dimensional integral lattice $\Lambda$, its \emph{dual lattice} is $\Lambda^*:=\{\mathbf{x}\in \mathbb{R}^n\mid (\mathbf{x},\mathbf{y})\in \mathbb{Z}$ for each $y\in\Lambda\}$, and $\Lambda$ is called \emph{unimodular} if $\Lambda=\Lambda^*$. \par

For two orthogonal lattices $\Lambda_1$ and $\Lambda_2$, that is, $(\mathbf{x},\mathbf{y})=0$ for all $\mathbf{x}\in\Lambda_1$, $\mathbf{y}\in\Lambda_2$, their \emph{direct sum}, denoted by $\Lambda_1\oplus\Lambda_2$, is the lattice $\{\mathbf{x}+\mathbf{y}\mid \mathbf{x}\in\Lambda_1,\mathbf{y}\in\Lambda_2\}$. 
A lattice is called \emph{irreducible} if it is not a non-trivial direct sum of orthogonal sublattices; otherwise, it is \emph{reducible}.\par

If $X$ is a finite set of vectors in  $\mathbb{R}^n$ such that $(\mathbf{x},\mathbf{y})\in \mathbb{Z}$ for all $\mathbf{x,y}\in X$, then $\Lambda:=\left\{ \sum_{\mathbf{x}\in X}\alpha_{\mathbf{x}}\mathbf{x}\mid \alpha_{\mathbf{x}}\in\mathbb{Z}\right\}$ is an integral lattice. In this case, $\Lambda$ is said to be \emph{generated} by $X$. An integral lattice generated by a set of vectors of squared norm $2$ is called a \emph{root lattice}.

\begin{theorem}[{\cite{Witt1941Spiegelungsgruppen}}]\label{irreducible root lattices}
    Each irreducible root lattice is one of the following:
    \begin{enumerate}[label=\rm{(\roman*)}]
        \item $A_n=\{\mathbf{x}\in\mathbb{Z}^{n+1}\mid \sum\limits_{i=1}^{n+1} x_i=0\}$ for $n\geq1$,
        \item $D_n=\{\mathbf{x}\in\mathbb{Z}^n\mid \sum\limits_{i=1}^n x_i$ is even$\}$ for $n\geq4$,
        \item $E_8=D_8\cup (\mathbf{c}+D_8)$, where $\mathbf{c}=\frac{1}{2}(1,1,1,1,1,1,1,1)^T$,
        \item $E_7=\{\mathbf{x}\in E_8\mid \sum\limits_{i=1}^8 x_i=0\}$,
        \item $E_6=\{\mathbf{x}\in E_7\mid x_7+x_8=0\}$.
    \end{enumerate}
\end{theorem}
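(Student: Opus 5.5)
This is Witt's classification, and I would follow the standard route through simply-laced Dynkin diagrams. Let $\Lambda$ be an irreducible lattice generated by its set $R$ of squared norm $2$ vectors (roots). For roots $\mathbf{x}\neq\pm\mathbf{y}$, Cauchy--Schwarz and integrality give $(\mathbf{x},\mathbf{y})\in\{0,\pm1\}$. The set $R$ is finite, since $\Lambda$ is discrete and $R$ lies in a bounded set. It is also closed under the reflections $s_{\mathbf{r}}(\mathbf{x})=\mathbf{x}-(\mathbf{x},\mathbf{r})\mathbf{r}$, because each $s_{\mathbf{r}}$ preserves $\Lambda$ and norms. Hence $R$ is a simply-laced root system in the classical sense.

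\textbf{Step 1: a basis of simple roots.} Choose a generic linear functional $f$ that is nonzero on $R$, and let $R^+=\{\mathbf{r}\in R\mid f(\mathbf{r})>0\}$. Call a positive root simple if it is not a sum of two positive roots; let $\Pi$ be the set of simple roots. Two facts need proof.
\begin{enumerate}[label=\rm{(\alph*)}]
\item Distinct simple roots satisfy $(\mathbf{a},\mathbf{b})\le 0$. Otherwise $(\mathbf{a},\mathbf{b})=1$, so $\mathbf{a}-\mathbf{b}$ is a root, and one of $\pm(\mathbf{a}-\mathbf{b})$ is positive; this contradicts simplicity of $\mathbf{a}$ or of $\mathbf{b}$.
\item Every positive root is a nonnegative integer combination of $\Pi$. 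This follows by induction on $f$.
\end{enumerate}
Vectors with pairwise nonpositive inner products lying in an open half-space are linearly independent. So $\Pi$ is a $\mathbb{Z}$-basis of $\Lambda$, and its Gram matrix is $2\mathbf{I}-A(D)$, where $A(D)$ is the adjacency matrix of a simple graph $D$ (the Dynkin diagram). Since $R$ generates $\Lambda$ and $\Lambda$ is irreducible, $D$ is connected. Since the Gram matrix is positive definite, $\lambda_{\max}(D)<2$.

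\textbf{Step 2: classify $D$.} This is the main combinatorial step. Connected graphs with largest eigenvalue below $2$ are exactly the Smith-type trees $A_n$ (paths), $D_n$, $E_6$, $E_7$, $E_8$. To prove this, I would exhibit for each excluded graph an eigenvector of eigenvalue exactly $2$; these are the extended diagrams $\tilde A_n$ (cycles), $\tilde D_n$, $\tilde E_6$, $\tilde E_7$, $\tilde E_8$. By Perron--Frobenius monotonicity, $D$ cannot contain any of them as a subgraph. Excluding cycles forces $D$ to be a tree. Excluding $\tilde D_n$ forces at most one branch vertex, of degree at most $3$. Excluding $\tilde E_6$, $\tilde E_7$, $\tilde E_8$ bounds the arm lengths $p\le q\le r$ to give $(1,1,r)$, $(1,2,2)$, $(1,2,3)$ or $(1,2,4)$. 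I expect this case analysis to be the most laborious part, though it is routine.

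\textbf{Step 3: identify the lattices.} The lattice is determined up to isometry by the Gram matrix of $\Pi$. It therefore suffices to exhibit, inside each lattice listed in the theorem, a set of roots realising the corresponding diagram:
\begin{enumerate}[label=\rm{(\roman*)}]
\item for $A_n$, the roots $\mathbf{e}_i-\mathbf{e}_{i+1}$;
\item for $D_n$, the roots $\mathbf{e}_i-\mathbf{e}_{i+1}$ ($i<n$) together with $\mathbf{e}_{n-1}+\mathbf{e}_n$;
\item for $E_8$, the roots of $D_7$ together with $\mathbf{c}$ suitably signed, e.g. $\frac12(-1,\dots,-1,1)$;
\item for $E_7$ and $E_6$, the corresponding subdiagrams.
\end{enumerate}
One must also check that these roots span the whole described lattice and not a proper sublattice. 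This is a short index computation: the determinants are $n+1$, $4$, $1$, $2$, $3$, and they match the determinants of the described lattices. Finally, the restrictions $n\ge 1$ for $A_n$ and $n\ge4$ for $D_n$ remove duplicates, since $D_3=A_3$ and $D_2=A_1\oplus A_1$ is reducible.
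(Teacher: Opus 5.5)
The paper gives no proof of this theorem; it quotes it from Witt as background. Your outline follows the standard textbook argument, and Steps 1 and 2 are sound as written:
\begin{itemize}
\item simple roots for a generic functional;
\item positive definite Gram matrix $2\mathbf{I}-A(D)$;
\item Smith's classification of connected graphs with largest eigenvalue below $2$ via the extended diagrams;
\item identification of each lattice by exhibiting a root basis inside it and comparing determinants.
\end{itemize}

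Step 3, however, contains a concrete error that breaks the containment your index argument relies on. With $\mathbf{c}=\frac{1}{2}(1,\dots,1)$, the half-integral vectors of $E_8=D_8\cup(\mathbf{c}+D_8)$ are exactly the vectors $\frac{1}{2}(\pm1,\dots,\pm1)$ with an \emph{even} number of minus signs. Your example $\frac{1}{2}(-1,\dots,-1,1)$ has seven minus signs. It therefore lies in the other coset $\frac{1}{2}(1,\dots,1,-1)+D_8$, not in $E_8$. It does have the right inner products with $\mathbf{e}_i-\mathbf{e}_{i+1}$ ($1\le i\le 6$) and $\mathbf{e}_6+\mathbf{e}_7$. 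But the lattice it spans is not contained in the set defined in the statement, so the determinant comparison proves nothing. Use $-\mathbf{c}$ instead: it is orthogonal to every $\mathbf{e}_i-\mathbf{e}_{i+1}$ and has inner product $-1$ with $\mathbf{e}_6+\mathbf{e}_7$, giving arm lengths $(1,2,4)$.

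The same issue affects ``the corresponding subdiagrams'' for $E_7$ and $E_6$. These are the specific sets $\{\mathbf{x}\in E_8\mid \sum x_i=0\}$ and $\{\mathbf{x}\in E_7\mid x_7+x_8=0\}$, and subdiagrams of your $E_8$ basis do not lie in them: both $\mathbf{e}_6+\mathbf{e}_7$ and $-\mathbf{c}$ have nonzero coordinate sum. You must choose the roots inside these sets. For $E_7$, take $\mathbf{e}_i-\mathbf{e}_{i+1}$ ($1\le i\le 6$) together with $\mathbf{b}=\frac{1}{2}(-1,-1,-1,1,1,1,1,-1)$. This $\mathbf{b}$ has four minus signs and coordinate sum $0$, so it lies in $E_7$. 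It is adjacent only to $\mathbf{e}_3-\mathbf{e}_4$, giving arm lengths $(1,2,3)$. Dropping $\mathbf{e}_6-\mathbf{e}_7$ gives arm lengths $(1,2,2)$ for $E_6$, and indeed $b_7+b_8=0$.

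Finally, the determinants $2$ and $3$ of the described $E_7$ and $E_6$ are not visible from their definitions. The quickest justification is that these sets are the orthogonal complements, in the unimodular lattice $E_8$, of two primitive sublattices:
\begin{itemize}
\item $\mathbb{Z}\mathbf{c}$, with Gram matrix $(2)$;
\item $\mathbb{Z}\mathbf{c}+\mathbb{Z}(\mathbf{e}_7+\mathbf{e}_8)$, a copy of $A_2$ with determinant $3$.
\end{itemize}
With these repairs your proof is complete.
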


 Note that the root lattices $A_n$ and $D_n$ can be realized as sublattices of a standard lattice, while $E_6$ and $E_7$ occur as sublattices of  $E_8$.

\begin{lemma}\label{lattice to non-extendable of sg}
Given a signed graph $(G,\sigma)$ with smallest eigenvalue $\lambda$, where $\lambda\leq-1$ is a real number.
\begin{enumerate}[label=\rm{(\roman*)}]
    \item If $\Lambda(G,\sigma)$ is irreducible, then $(G,\sigma)$ is connected.
    \item If $\Lambda(G,\sigma)$ is irreducible and unimodular with minimum squared norm $-\lfloor\lambda\rfloor$, then $(G,\sigma)$ is maximal implies that $(G,\sigma)$ is non-extendable.
\end{enumerate}

\end{lemma}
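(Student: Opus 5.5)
Write $m=-\lfloor\lambda\rfloor$, so $m\ge 1$ and $\operatorname*{Gr}(G,\sigma)=A(G,\sigma)+m\mathbf{I}=N^TN$. For each vertex $x$ let $\mathbf{n}_x$ be the corresponding column of $N$. Then $(\mathbf{n}_x,\mathbf{n}_x)=m$, and for distinct vertices $(\mathbf{n}_x,\mathbf{n}_y)=A_{xy}\in\{0,\pm1\}$. By definition $\Lambda(G,\sigma)$ is generated by the vectors $\mathbf{n}_x$.

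For (i), I argue by contraposition. Suppose $(G,\sigma)$ is disconnected, and split $V(G)=V_1\cup V_2$ with both parts nonempty and no edges between them. Then $(\mathbf{n}_x,\mathbf{n}_y)=0$ whenever $x\in V_1$ and $y\in V_2$. Let $\Lambda_i$ be the sublattice generated by $\{\mathbf{n}_x: x\in V_i\}$. These two sublattices are orthogonal, and together they generate $\Lambda(G,\sigma)$, so $\Lambda(G,\sigma)=\Lambda_1\oplus\Lambda_2$. Each $\Lambda_i$ is nonzero, because every generator has squared norm $m\geq 1$. Hence the decomposition is non-trivial and $\Lambda(G,\sigma)$ is reducible.

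For (ii), assume $(G,\sigma)$ is maximal and, for contradiction, that it is extendable. Then there is a connected $(G',\sigma')$ containing $(G,\sigma)$ as a proper induced subgraph with $\lambda_{\min}(G',\sigma')\ge -m$. By maximality, condition (iii) must fail, so the rank strictly increases:
\[
\rk(A(G',\sigma')+m\mathbf{I})>\rk(A(G,\sigma)+m\mathbf{I}).
\]

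Since $\lambda_{\min}(G',\sigma')\geq -m$, the matrix $A(G',\sigma')+m\mathbf{I}$ is positive semidefinite. Factor it as $N'^TN'$. The principal submatrix indexed by $V(G)$ is exactly $\operatorname*{Gr}(G,\sigma)$, so we may take the columns for $V(G)$ to be the $\mathbf{n}_x$, and $\Lambda(G,\sigma)\subseteq \Lambda(G',\sigma')$ with both inside one Euclidean space. Let $W=\operatorname{span}\Lambda(G,\sigma)$. The rank increase means some vertex $z$ of $G'$ has $\mathbf{n}_z\notin W$.

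Decompose $\mathbf{n}_z=\mathbf{p}+\mathbf{q}$ with $\mathbf{p}\in W$ and $\mathbf{q}\perp W$. Here $\mathbf{q}\ne\mathbf{0}$, because $\mathbf{n}_z\notin W$. For each $x\in V(G)$, the inner product $(\mathbf{n}_z,\mathbf{n}_x)$ is an integer, and $(\mathbf{n}_z,\mathbf{n}_x)=(\mathbf{p},\mathbf{n}_x)$ since $\mathbf{q}\perp W$. So $\mathbf{p}$ has integral inner products with every generator of $\Lambda(G,\sigma)$, which places $\mathbf{p}$ in the dual lattice $\Lambda(G,\sigma)^*$. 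By unimodularity, $\mathbf{p}\in\Lambda(G,\sigma)$.

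Next, $\mathbf{p}\ne \mathbf{0}$. Otherwise $\mathbf{n}_z\perp \mathbf{n}_x$ for all $x\in V(G)$. Iterating over vertices of $G'$ that are adjacent to the current lattice could then keep the extension inside $W$. The cleanest way to handle this is to choose $z$ with $\mathbf{n}_z\notin W$ to be adjacent to some vertex whose vector lies in $W$.

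To arrange that choice, I would enlarge $(G,\sigma)$ first. Take the induced subgraph on all vertices $y$ with $\mathbf{n}_y\in W$. Its lattice contains $\Lambda(G,\sigma)$ and lies in $W$. Every vector in it has integral inner products with $\Lambda(G,\sigma)$, so it lies in $\Lambda(G,\sigma)^*=\Lambda(G,\sigma)$; hence the enlarged lattice is still $\Lambda(G,\sigma)$, and it remains unimodular with the same span. Since $G'$ is connected and some vector of $G'$ lies outside $W$, there is an edge $yz$ with $\mathbf{n}_y\in W$ and $\mathbf{n}_z\notin W$. Then $(\mathbf{p},\mathbf{n}_y)=(\mathbf{n}_z,\mathbf{n}_y)=\pm1$, so $\mathbf{p}\neq\mathbf{0}$.

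Now $\mathbf{p}$ is a nonzero vector of $\Lambda(G,\sigma)$, and the minimum squared norm of that lattice is $m$, so $\|\mathbf{p}\|^2\ge m$. On the other hand,
\[
m=\|\mathbf{n}_z\|^2=\|\mathbf{p}\|^2+\|\mathbf{q}\|^2>\|\mathbf{p}\|^2,
\]
since $\mathbf{q}\neq\mathbf{0}$. This is a contradiction, so $(G,\sigma)$ is non-extendable.

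Irreducibility is used only through (i), to guarantee that $(G,\sigma)$ is connected so that the definitions apply. The main obstacle is the step showing $\mathbf{p}\ne\mathbf{0}$; it is resolved by the enlargement to all vertices with vectors in $W$ followed by the connectivity argument above.
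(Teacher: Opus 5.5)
Your proof is correct and follows essentially the same argument as the paper: (i) is the same splitting into orthogonal sublattices, and in (ii) you use the same mechanism of projecting a new vector onto $W$, placing the projection in $\Lambda(G,\sigma)^*=\Lambda(G,\sigma)$ by unimodularity, and comparing its norm with the minimum $m$. The only difference is how maximality is used: the paper applies it to each one-vertex extension to get $N'_y\notin\Lambda(G,\sigma)$ for every new vertex $y$ (tacitly using that such an extension would be connected), deduces that every new vertex is orthogonal to $V(G)$, and contradicts connectivity of $(G',\sigma')$; you instead apply it once to $(G',\sigma')$ to force a rank increase, then use connectivity to choose $z$ adjacent to a vertex whose vector lies in $W$---and for that choice your enlargement step is superfluous, since $(\mathbf{p},\mathbf{n}_y)=(\mathbf{n}_z,\mathbf{n}_y)$ holds for every $\mathbf{n}_y\in W$.
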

\begin{proof}
  $\rm{(i)}$ Suppose that $(G,\sigma)$ contains at least two connected components, then there exist two matrices $N_1$ and $N_2$ such that 
    $$A(G,\sigma)-\lfloor\lambda\rfloor\mathbf{I}=\begin{pmatrix} N_1^TN_1 & O \\ O & N_2^TN_2 \end{pmatrix}, $$
where $O$ is the all-zeros matrix. Let $\Lambda_1$ and $\Lambda_2$ denote the lattices generated by the columns of $N_1$ and $N_2$, respectively, and then $\Lambda(G,\sigma)=\Lambda_1\oplus \Lambda_2$, which contradicts the condition that $\Lambda(G,\sigma)$ is irreducible.\par

  $\rm{(ii)}$ Suppose that there exists a connected signed graph $(G',\sigma')$ with smallest eigenvalue at least $\lfloor\lambda\rfloor$ which contains $(G,\sigma)$ as an proper induced  subgraph. Assume that 
  
  $$A(G,\sigma)-\lfloor\lambda\rfloor\mathbf{I}=N^TN\textrm{ and } A(G',\sigma')-\lfloor\lambda\rfloor\mathbf{I}=(N')^TN',$$ 
  where the columns of $N$ and $N'$ are indexed by $V(G)$ and $V(G')$, respectively. \par
  
  Let $y\in V(G')\setminus V(G)$.   Since $(G,\sigma)$ is maximal, one has $N'_y\notin\Lambda(G,\sigma)$.  It is claimed that $N'_y\perp N_x$ for each vertex $x\in V(G)$. Let $\mathbf{v}_y$ be the projection of $N'_y$ onto the vector space spanned by the columns of $N$. If follows that  $(\mathbf{v}_y, N_x)=(N'_y,N_x)$ is an integer for each vertex $x\in V(G)$, and thus $\mathbf{v}_y\in\Lambda(G,\sigma)$, as $\Lambda(G,\sigma)$ is unimodular. Since $(N'_y,N'_y)=-\lfloor\lambda\rfloor$ equals the minimum squared norm of $\Lambda(G,\sigma)$, either $\mathbf{v}_y=N'_y$ or $\mathbf{v}_y=\mathbf{0}$ holds. The first possibility is impossible, because $N'_y\notin\Lambda(G,\sigma)$. Hence $\mathbf{v}_y=\mathbf{0}$  and $N'_y\perp N_x$ for each vertex $x\in V(G)$. This implies that $(G,\sigma)$ is a connected component of $(G',\sigma)$, which contradicts the connectedness of $(G',\sigma')$. \end{proof}

\subsection{Hoffman signed graphs}

A \emph{Hoffman signed graph} $\mathfrak{h}=(H,\sigma,\ell)$ consists of a signed graph $(H,\sigma)$ together with a labeling map $\ell:V(H)\rightarrow\{f,s\}$, subject to the following conditions: any pair of vertices with label $f$ are non-adjacent and any vertex with label $f$ has at least one neighbor. A vertex labeled $s$ is called a \emph{slim vertex}, while a vertex labeled $f$ is called a \emph{fat vertex}. The set of slim vertex and fat vertex of $\mathfrak{h}$ are denoted by $V_s(\mathfrak{h})$ and $V_f(\mathfrak{h})$, respectively. A Hoffman signed graph is called \emph{fat}, if each slim vertex of the  $\mathfrak{h}$ has at least one fat neighbor. The \emph{slim graph} of $\mathfrak{h}$, denoted by $(H_s,\sigma_s)$, is the subgraph $(H,\sigma)$ induced on $V_s(\mathfrak{h})$. A signed graph may be regarded as a Hoffman signed graph all of whose vertices are slim. Conversely, a Hoffman signed graph $\mathfrak{h}=(H,\sigma,\ell)$ with only slim vertices will be identified with the signed graph $(H,\sigma)$.\par
 
Given two Hoffman signed graphs $\mathfrak{h}_1=(H_1,\sigma_1,\ell_1)$ and $\mathfrak{h}=(H,\sigma,\ell)$,  $\mathfrak{h}_1$ is called a Hoffman \emph{subgraph} of $\mathfrak{h}$, if $(H_1,\sigma_1)$ is a subgraph of $(H,\sigma)$ and $\ell_1=\ell|_{V(H_1)}$, and $\mathfrak{h}_1$ is called an \emph{induced Hoffman subgraph} of $\mathfrak{h}$, if $(H_1,\sigma_1)$ is an induced subgraph of $(H,\sigma)$ and $\ell_1=\ell|_{V(H_1)}$.
  Two Hoffman signed graphs $\mathfrak{h}=(H,\sigma,\ell)$ and $\mathfrak{h'}=(H',\sigma',\ell')$ are \emph{isomorphic}, if there exists an isomorphism between $(H,\sigma)$ and $(H',\sigma')$ that preserves both edge signs and vertex labels.\par
  
  For a Hoffman signed graph $\mathfrak{h}=(H,\sigma,\ell)$, let
  $$A=\begin{pmatrix}
    A_s&C\\
    C^T&O
  \end{pmatrix}$$
  be the adjacency matrix of $(H,\sigma)$, where $A_s$ is the adjacency matrix of the slim graph of $(H,\sigma)$.
  The matrix $S(\mathfrak{h}):=A_s-CC^T$ is called the \emph{special matrix} of $\mathfrak{h}$. The \emph{eigenvalues} of $\mathfrak{h}$ are defined to be the eigenvalues of $S(\mathfrak{h})$, and the smallest eigenvalue of $\mathfrak{h}$ is denote by $\lambda_{\min}(\mathfrak{h})$. In particular, if $\mathfrak{h}=(H,\sigma,\ell)$ has no fat vertices, then $S(\mathfrak{h})=A(H,\sigma)$, and thus the eigenvalues of $\mathfrak{h}$ are precisely the eigenvalues of $(H,\sigma)$. \par
  
%Let $\mathfrak{h}=(H,\sigma,\ell)$ be a Hoffman signed graph and $U$ be a subset of $V(H)$. A Hoffman signed graph $\mathfrak{h'}=(H',\sigma',\ell')$ is said to be obtained from $\mathfrak{h}$ by \emph{switching} with respect to $U$, if $(H',\sigma')$ is obtained from $(H,\sigma)$ by switching with respect to $U$ and $\ell'(x)=\ell(x)$ for every $x\in V(H')$. Two Hoffman signed graphs are called \emph{switching equivalent}, if one can be obtained from the other by switching. \par

Let $\mathfrak{h}$ be a Hoffman signed graph and $n$ be a positive integer. A positive clique with $n$ vertices, denote by $(K_n,+)$, is the signed graph whose underlying graph is complete and with each edge signed $+$. Define $G(\mathfrak{h},n)$ as the signed graph obtained from $\mathfrak{h}$ by replacing each fat vertex with a positive clique $(K_n,+)$. The following result originates from Hoffman \cite{hoffman1977onsigned}, and was later reproved by Gavrilyuk et al. \cite{Gavrilyuk2021signed} in the framework of Hoffman signed graphs.

\begin{theorem}[{\cite[Theorem $4.2$]{Gavrilyuk2021signed}}]\label{Hoffman-Ostrowski Theorem of signed case}
 For every Hoffman signed graph $\mathfrak{h}$ and every positive integer $n$, $$\lambda_{\min}(G(\mathfrak{h},n))\geq\lambda_{\min}(\mathfrak{h}), \textrm{ and }\lim\limits_{n\rightarrow+\infty}\lambda_{\min}(G(\mathfrak{h},n))=\lambda_{\min}(\mathfrak{h}).$$
\end{theorem}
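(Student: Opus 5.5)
The plan is to compare the quadratic form of $A(G(\mathfrak{h},n))$ with that of the special matrix $S(\mathfrak{h})=A_s-CC^T$. A completion of squares gives the lower bound, and an explicit test vector gives the limit. Write $\mu=\lambda_{\min}(\mathfrak{h})$. If $\mathfrak{h}$ has no fat vertices, then $G(\mathfrak{h},n)=(H,\sigma)$ and $S(\mathfrak{h})=A(H,\sigma)$, so there is nothing to prove. Otherwise some fat vertex $f$ has a slim neighbour $x$. The diagonal entry $S(\mathfrak{h})_{xx}=-\sum_{g}C_{xg}^2$ is then at most $-1$, so $\mu\le -1$. This inequality will be needed below.

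For the lower bound, index the vertices of $G(\mathfrak{h},n)$ by $V_s(\mathfrak{h})$ together with $n$ copies of each fat vertex $f$. These $n$ copies form a positive clique, and every slim $x$ is joined to all of them with the common sign $C_{xf}$. Take a test vector $(\mathbf{u},(\mathbf{w}_f)_f)$ with $\mathbf{u}\in\mathbb{R}^{V_s}$ and $\mathbf{w}_f\in\mathbb{R}^n$, and put $s_f=\mathbf{1}^T\mathbf{w}_f$ and $\mathbf{s}=(s_f)_f$. The quadratic form is
\[
\mathbf{u}^TA_s\mathbf{u}+2\,\mathbf{s}^TC^T\mathbf{u}+\sum_f\bigl(s_f^2-\|\mathbf{w}_f\|^2\bigr)
=\mathbf{u}^TS(\mathfrak{h})\mathbf{u}+\|C^T\mathbf{u}+\mathbf{s}\|^2-\sum_f\|\mathbf{w}_f\|^2 .
\]
Here we used $\mathbf{w}_f^T(J-I)\mathbf{w}_f=s_f^2-\|\mathbf{w}_f\|^2$ and the fact that no two fat vertices are adjacent. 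The right-hand side is at least $\mu\|\mathbf{u}\|^2-\sum_f\|\mathbf{w}_f\|^2$. Since $\mu\le-1$, this is at least $\mu\bigl(\|\mathbf{u}\|^2+\sum_f\|\mathbf{w}_f\|^2\bigr)$. By the Rayleigh principle, $\lambda_{\min}(G(\mathfrak{h},n))\ge\mu$ for every $n$.

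For the limit, let $\mathbf{u}$ be a unit eigenvector of $S(\mathfrak{h})$ for $\mu$, and choose $\mathbf{w}_f=-\frac{(C^T\mathbf{u})_f}{n}\mathbf{1}$. Then $\mathbf{s}=-C^T\mathbf{u}$, so the square term vanishes, and $\sum_f\|\mathbf{w}_f\|^2=\|C^T\mathbf{u}\|^2/n=O(1/n)$. The Rayleigh quotient of this vector equals $(\mu-O(1/n))/(1+O(1/n))$, which tends to $\mu$. Combined with the lower bound, this gives $\lim_{n\to\infty}\lambda_{\min}(G(\mathfrak{h},n))=\mu$.

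The only delicate point is the step that absorbs $-\sum_f\|\mathbf{w}_f\|^2$ into $\mu$ times the total norm, which requires $\mu\le-1$. This is exactly why the fat-vertex case and the no-fat-vertex case are separated at the start, using the condition that every fat vertex has a neighbour.
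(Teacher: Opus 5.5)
Your proof is correct and complete. The identity
\[
\mathbf{z}^TA(G(\mathfrak{h},n))\mathbf{z}=\mathbf{u}^TS(\mathfrak{h})\mathbf{u}+\|C^T\mathbf{u}+\mathbf{s}\|^2-\sum_f\|\mathbf{w}_f\|^2
\]
holds for every $n\ge 1$, including $n=1$. The estimate $\mu\le S(\mathfrak{h})_{xx}\le -1$ is exactly where the requirement that fat vertices have neighbours enters. Your test vector even gives the explicit bound $\mu\le\lambda_{\min}(G(\mathfrak{h},n))\le\mu+\frac{|1+\mu|\,c}{n+c}$ with $c=\|C^T\mathbf{u}\|^2$.

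The paper gives no argument of its own here. It imports the result from Gavrilyuk et al., who obtain it from a Hoffman-type limit theorem formulated for Hermitian matrices. The classical route to such limit theorems eliminates the clique coordinates instead. For $\lambda<-1$, the block $I\otimes(J_n-(1+\lambda)I_n)$ of $A(G(\mathfrak{h},n))-\lambda I$ is positive definite, and its Schur complement is $A_s-\lambda I-\frac{n}{n-1-\lambda}CC^T$. This dominates $S(\mathfrak{h})-\lambda I$ because $\frac{n}{n-1-\lambda}<1$, and converges to it as $n\to\infty$. There, the threshold $\lambda<-1$ plays the role of your $\mu\le-1$.

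Your completion of squares is the quadratic-form counterpart of this computation. It needs no inversion and yields the $O(1/n)$ rate directly. The eigen-equation formulation gives more: every eigenvalue $\lambda\notin\{-1,n-1\}$ of $G(\mathfrak{h},n)$ is an eigenvalue of $A_s-\frac{n}{n-1-\lambda}CC^T$, so it describes the whole spectrum rather than just its minimum.
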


\begin{theorem}[{\cite[Theorem $1.1$, Proposition $4.12$ and Theorem $5.2$]{koolen2026a}}]\label{existence of fat HSG}
Let $\lambda< -1$ be a real number and $v_f,v_s,p$ be non-negative integers. There exist positive integers $m_\lambda$ and $q(\lambda,v_f,v_s,p)$ with the following property.\par
For every integer $n\geq q(\lambda,v_f,v_s,p)$, there exists a positive integer $d(\lambda,n)$ such that, for any signed graph $(G,\sigma)$ with smallest eigenvalue at least $\lambda$ and minimum valency at least $d(\lambda,n)$, and for any Hoffman signed graph $\mathfrak{h}$ with at most $v_f$ fat vertices and at most $v_s$ slim vertices, there exists a fat Hoffman signed graph $\mathfrak{g}(G,\sigma,m_\lambda,n)$ such that the following hold.
\begin{enumerate}[label=\rm{(\roman*)}]
\item The signed graph $(G,\sigma)$ is the slim graph of $\mathfrak{g}(G,\sigma,m_\lambda,n)$.
\item If $\mathfrak{h}$ is an induced Hoffman subgraph of $\mathfrak{g}(G,\sigma,m_\lambda,n)$, then the signed graph $G(\mathfrak{h},p)$ is an induced subgraph of $(G,\sigma)$.

\end{enumerate}
\end{theorem}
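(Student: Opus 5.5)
This result is quoted from \cite{koolen2026a}, so the plan below reconstructs the argument rather than anything given earlier in this paper; it follows the Ramsey-plus-Hoffman-graph strategy used there and in \cite{Gavrilyuk2021signed}. Throughout, the switching freedom is used to make large cliques positive, and Theorem \ref{Hoffman-Ostrowski Theorem of signed case} is the tool that excludes forbidden configurations.

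\textbf{Step 1: large valency forces large positive cliques.} The star $K_{1,t}$, with any signing, has smallest eigenvalue $-\sqrt t$. So if $\lambda_{\min}(G,\sigma)\ge\lambda$, the underlying graph of every neighbourhood has independence number less than $\lambda^2$. By Ramsey's theorem, a vertex of valency at least $d(\lambda,n)$ therefore lies in a clique of the underlying graph whose size tends to infinity with $d$.

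A large signed clique with bounded smallest eigenvalue is, after switching, almost entirely positive. To see this, apply Ramsey once more to a $2$-colouring that records balance. A large unbalanced configuration, for example a large negative clique or a large clique with many inconsistently signed triangles, contains an induced subgraph with eigenvalue below $\lambda$. Hence every vertex lies in a positive clique of size at least $n$, taken up to switching.

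\textbf{Step 2: a vertex meets a large positive clique in few vertices or in almost all of them.} Let $x$ lie outside a large positive clique $K$. I would show that either $x$ has fewer than $m_\lambda$ neighbours in $K$, or $x$ is adjacent with one common sign to all but fewer than $m_\lambda$ vertices of $K$. The argument uses Theorem \ref{Hoffman-Ostrowski Theorem of signed case}. Replace $K$ by a fat vertex, so the configuration becomes a Hoffman signed graph with one fat vertex and a slim vertex joined to part of it. The intermediate patterns (many neighbours but not almost all, or mixed signs) give Hoffman signed graphs with smallest eigenvalue below $\lambda$. Their blow-ups $G(\mathfrak h,n)$ then have eigenvalues tending to that value, and for $n$ large this contradicts $\lambda_{\min}(G,\sigma)\ge\lambda$. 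The constant $m_\lambda$ comes out of this computation.

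Next, call two large positive cliques equivalent when each vertex of one is adjacent to almost all of the other; Step 2 shows this is an equivalence relation. Each class is a quasi-clique, and each quasi-clique becomes a fat vertex. A slim vertex $x$ is joined to the fat vertex $F$ when $x$ has at least $m_\lambda$ neighbours in the quasi-clique $F$. The edge gets the common sign of those adjacencies, after fixing a switching that makes each clique positive. This defines $\mathfrak g(G,\sigma,m_\lambda,n)$. Its slim graph is $(G,\sigma)$, which gives (i), and it is fat by Step 1.

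\textbf{Step 3: realising small induced Hoffman subgraphs.} Suppose $\mathfrak h$ is induced in $\mathfrak g$ with at most $v_f$ fat vertices and at most $v_s$ slim vertices. For each fat vertex $F$ of $\mathfrak h$, choose $p$ vertices from the quasi-clique $F$ one by one, discarding at each stage the bad candidates. A candidate is bad if it is adjacent to, or has the wrong sign towards, a slim vertex of $\mathfrak h$ that is not joined to $F$, or if it is adjacent to an already chosen vertex belonging to another fat vertex.

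The main obstacle is bounding the number of bad candidates uniformly. I need that two distinct quasi-cliques have boundedly many common vertices, and that a vertex outside a quasi-clique sees fewer than $m_\lambda$ of its vertices. I would obtain both from the same mechanism as Step 2: two fat vertices sharing many slim vertices produce a Hoffman signed graph with $\lambda_{\min}<\lambda$, which is impossible by Theorem \ref{Hoffman-Ostrowski Theorem of signed case}. Then each of the at most $v_s+v_fp$ constraints removes at most a bounded number of candidates from a quasi-clique of size at least $n$. Taking $n\ge q(\lambda,v_f,v_s,p)$ larger than this total leaves enough choices. The chosen vertices together with the slim vertices of $\mathfrak h$ induce $G(\mathfrak h,p)$, which proves (ii).
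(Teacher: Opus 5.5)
The paper gives no proof of this statement; it is quoted from \cite{koolen2026a}, so your outline can only be judged on its own. Its architecture is the natural one: Ramsey for large cliques, a dichotomy for a vertex against a large clique, quasi-cliques as fat vertices, and a greedy choice of representatives. However, Step~3, which you yourself flag as the main obstacle, rests on a circular argument.

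You want to control two distinct fat vertices by excluding, via Theorem~\ref{Hoffman-Ostrowski Theorem of signed case}, the Hoffman signed graph $\mathfrak f$ with two fat vertices and $k$ common slim neighbours. That theorem only rules out $\mathfrak f$ once you know that $G(\mathfrak f,N)$ is an induced subgraph of $(G,\sigma)$ for large $N$. That means having $N$ vertices in each of the two quasi-cliques with no edges between the two groups. This cross non-adjacency is exactly what is to be proved: it is (ii) for this particular $\mathfrak f$.

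What is missing is a direct lemma: if $C_1\not\equiv C_2$, then all but boundedly many vertices of $C_1$ (the bound depending only on $\lambda$) have fewer than $m_\lambda$ neighbours in $C_2$. It follows from Step~2 and double counting, as follows.
\begin{itemize}
\item If a set $A\subseteq C_1$ of moderate size sees almost all of $C_2$, counting edges shows that almost every vertex of $C_2$ sees almost all of $C_1$.
\item Suppose, say, $x\in C_1$ witnesses $C_1\not\equiv C_2$. Pick such vertices $w_1,\dots,w_k\in C_2$ not adjacent to $x$, and put $A'=A\cap\bigcap_i N(w_i)$.
\item Then $A'\cup\{w_1,\dots,w_k\}$ is a clique, which is balanced after passing to a subset of the $w_i$ by Ramsey, since $(K_r,-)$ has smallest eigenvalue $1-r$.
\item $x$ sees all of $A'$ but no $w_i$, contradicting Step~2.
\end{itemize}
With this lemma the two-fat-vertex bound is not even needed. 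When choosing representatives for one fat vertex of $\mathfrak h$, simply discard the boundedly many vertices of its clique that see many vertices of the cliques used for the other fat vertices.

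Three further points must be repaired.

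First, in Step~2, ``a slim vertex joined to part of a fat vertex'' is not a Hoffman signed graph, so no forbidden configuration is actually exhibited. The correct configuration takes the part $T$ of $K$ that $x$ sees with a fixed sign as the fat vertex, and $k$ further vertices of $K$ as pairwise positive slim vertices joined positively to it. Then $S(\mathfrak h)=-\mathbf{I}-A(K_{1,k})$ or $-\mathbf{I}-2A(K_{1,k})$ (star centred at $x$), according as $x$ misses these $k$ vertices or sees them with the opposite sign. The smallest eigenvalue is $-1-\sqrt k$ or $-1-2\sqrt k$. Choosing $k$ with $-1-\sqrt k<\lambda$, and then $|T|$ large via the limit in Theorem~\ref{Hoffman-Ostrowski Theorem of signed case}, gives $m_\lambda$.

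Second, there is in general no single switching making all large cliques positive at once; the signed graph from $D_n$ already has none. This is exactly why fat--slim edges carry signs. You must construct, for each quasi-clique $Q$, a switching function $\epsilon_Q$ compatible with every clique in the class, and sign $\{x,F_Q\}$ through $\epsilon_Q$. You must also draw the $p$ representatives of $F_Q$ from one clique inside $\{\epsilon_Q=+\}$; otherwise you only get $G(\mathfrak h,p)$ up to switching.

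Third, your joining rule ($\ge m_\lambda$ neighbours somewhere in the union $Q$) does not by itself give what Step~3 uses. Step~3 needs a joined slim vertex to see, with the recorded sign, almost all of the particular clique the representatives come from. That requires an extra Ramsey step or a larger threshold.
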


This subsection concludes by introducing the representation and the reduced representation of Hoffman signed graphs.\par

   For a Hoffman signed graph $\mathfrak{h}$, a mapping 
 $\phi:V(\mathfrak{h})\rightarrow\mathbb{R}^n$ satisfying
 $$(\phi(x),\phi(y))=\begin{cases}
    m, & \text{if } x=y\in V_s(\mathfrak{h}), \\[2mm]
    1, & \text{if } x=y\in V_f(\mathfrak{h}), \\[2mm]
    1, & \text{if } x,y \text{ are positively adjacent},\\[2mm]
    -1, & \text{if } x,y \text{ are negatively adjacent},\\[2mm]
    0, & \text{otherwise},
  \end{cases}$$ 
is called a \emph{representation of $\mathfrak{h}$ of squared norm $m$}. The lattice generated by $\{\phi(x)\mid x\in V(\mathfrak{h})\}$ is denoted by $\Lambda(\mathfrak{h},m)$. Note that the isomorphism class of $\Lambda(\mathfrak{h},m)$ depends only on $\mathfrak{h}$ and $m$, and is independent of $\phi$, justifying the notation.\par
 For a Hoffman signed graph $\mathfrak{h}=(H,\sigma,\ell)$, define $$n_{\mathfrak{h}}^f(x,y):=\sum_{\substack{z\in V_f(\mathfrak{h})\\\{x,z\},\{y,z\}\in E(\mathfrak{h}) }}\big|\{z\mid \sigma(\{x,z\})=\sigma(\{y,z\})\}\big|-\sum_{\substack{z\in V_f(\mathfrak{h})\\ \{x,z\},\{y,z\}\in E(\mathfrak{h})}}\big|\{z\mid \sigma(\{x,z\})\neq\sigma(\{y,z\})\}\big|$$ for $x,y\in V_s(\mathfrak{h})$.\par

 For a Hoffman signed graph $\mathfrak{h}$, a mapping $\psi:V_s(\mathfrak{h})\rightarrow\mathbb{R}^n$ satisfying
 $$(\psi(x),\psi(y))=\begin{cases}
    m-n_{\mathfrak{h}}^f(x,y), & \text{if } x=y, \\[2mm]
    1-n_{\mathfrak{h}}^f(x,y), & \text{if } x,y \text{ are positively adjacent},\\[2mm]
    -1-n_{\mathfrak{h}}^f(x,y), & \text{if } x,y \text{ are negatively adjacent},\\[2mm]
    -n_{\mathfrak{h}}^f(x,y), & \text{otherwise},
  \end{cases}$$ is called a \emph{reduced representation of $\mathfrak{h}$ of squared norm $m$}. The lattice generated by $\{\psi(x)\mid x\in V_s(\mathfrak{h})\}$ is denoted by $\Lambda^{\rm{red}}(\mathfrak{h},m)$. Note that the isomorphism class of $\Lambda^{\rm{red}}(\mathfrak{h},m)$ also depends only on $\mathfrak{h}$ and $m$, and is independent of $\psi$, justifying the notation.

\begin{lemma}[{\cite[Lemma $4.5$ and Theorem $4.6$]{koolen2026a}}]
\label{reduced representation lemma}
Let $\lambda\leq -1$ be an integer. If a Hoffman signed graph $\mathfrak{h}$ has smallest eigenvalue at least $\lambda$, then $$\Lambda(\mathfrak{h},|\lambda|)=\Lambda^{\rm{red}}(\mathfrak{h},|\lambda|)\oplus\mathbb{Z}^{|V_f(\mathfrak{h})|}.$$ 
\end{lemma}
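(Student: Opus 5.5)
The plan is to construct an explicit representation $\phi$ of $\mathfrak{h}$ of squared norm $m:=|\lambda|$, project the slim vectors away from the span of the fat vectors, and check that the projected vectors form a reduced representation. The direct sum decomposition will then come out of the orthogonality of the two parts.

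\emph{Existence of $\phi$.} Order the vertices with the slim ones first. The prescribed Gram matrix of a representation of squared norm $m$ is
$$M=\begin{pmatrix} m\mathbf{I}+A_s & C\\ C^T & \mathbf{I}\end{pmatrix},$$
where the identity block comes from two facts: fat vertices have squared norm $1$, and distinct fat vertices are non-adjacent. Since the lower right block is $\mathbf{I}$, which is positive definite, $M$ is positive semidefinite if and only if its Schur complement $m\mathbf{I}+A_s-CC^T=S(\mathfrak{h})+m\mathbf{I}$ is. The latter holds because $\lambda_{\min}(\mathfrak{h})\geq\lambda=-m$. Hence $M=N^TN$ for some matrix $N$, and the columns of $N$ give a representation $\phi$.

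\emph{Projection.} The vectors $\phi(z)$, $z\in V_f(\mathfrak{h})$, are pairwise orthogonal unit vectors, so they generate a copy of $\mathbb{Z}^{|V_f(\mathfrak{h})|}$. For each slim $x$, set
$$\psi(x):=\phi(x)-\sum_{z\in V_f(\mathfrak{h})}(\phi(x),\phi(z))\,\phi(z).$$
This is the orthogonal projection of $\phi(x)$ onto the orthogonal complement of the fat span. The coefficients $(\phi(x),\phi(z))$ lie in $\{0,\pm1\}$, so
$$\phi(x)-\psi(x)\in\mathbb{Z}^{|V_f(\mathfrak{h})|}.$$
Consequently, the lattice generated by all the $\phi$-vectors equals the lattice generated by the $\psi(x)$ together with the fat vectors. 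Since the $\psi(x)$ are orthogonal to every $\phi(z)$, this is the orthogonal direct sum of $\langle\psi(x)\rangle$ and $\mathbb{Z}^{|V_f(\mathfrak{h})|}$.

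\emph{Inner products of $\psi$.} Expanding and using orthonormality of the fat vectors gives
$$(\psi(x),\psi(y))=(\phi(x),\phi(y))-\sum_{z\in V_f(\mathfrak{h})}(\phi(x),\phi(z))(\phi(y),\phi(z)).$$
The sum is exactly $(CC^T)_{xy}$. This entry counts common fat neighbours $z$ with sign $+1$ when $\sigma(\{x,z\})=\sigma(\{y,z\})$ and $-1$ otherwise, so it equals $n_{\mathfrak{h}}^f(x,y)$ (including the case $x=y$). Thus $\psi$ satisfies the defining relations of a reduced representation of squared norm $m$. Since the isomorphism class of the generated lattice depends only on the Gram matrix, the lattice $\langle\psi(x)\rangle$ is $\Lambda^{\rm red}(\mathfrak{h},m)$. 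Combining this with the previous step yields the claimed decomposition.

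The only genuine obstacle is the existence of the representation, which is handled by the Schur complement argument in the first step. The remaining steps are routine bookkeeping, with the one point to watch being that the identification $(CC^T)_{xy}=n_{\mathfrak{h}}^f(x,y)$ matches the sign conventions in the definition.
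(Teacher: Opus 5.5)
Your proof is correct and follows essentially the standard argument behind the cited result (the paper itself only cites the source). Positive semidefiniteness of $S(\mathfrak{h})+|\lambda|\mathbf{I}$ (your Schur complement step) gives a representation $\phi$ of squared norm $|\lambda|$, and $\psi(x)=\phi(x)-\sum_{z}(\phi(x),\phi(z))\phi(z)$ lies in the lattice, is orthogonal to the unit fat vectors and has Gram matrix $S(\mathfrak{h})+|\lambda|\mathbf{I}$, which yields the decomposition $\Lambda^{\rm red}(\mathfrak{h},|\lambda|)\oplus\mathbb{Z}^{|V_f(\mathfrak{h})|}$.
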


\section{Proofs of Theorem \ref{main result} and Theorem \ref{main result 2}}\label{sec of proof}

This section presents the proof of Theorem \ref{main result} and Theorem \ref{main result 2}. \par

Let $\lambda\leq-1$ be a real number. A fat Hoffman signed graph is called a \emph{minimal forbidden fat Hoffman signed graph} $\mathfrak{f}$ for $\lambda$ if its smallest eigenvalue is less than $\lambda$, while every proper fat induced Hoffman subgraph of $\mathfrak{f}$ has smallest eigenvalue at least $\lambda$. Let $\mathcal{F}(\lambda)$ denote the set of pairwise non-isomorphic minimal forbidden fat Hoffman signed graphs for $\lambda$.\par

The first step is to prove the set $\mathcal{F}(-3)$ is finite. To prepare for the proof, we recall a result due to Koolen, Yang and Yang \cite{koolen2018on}.

\begin{theorem}[{\cite[Theorem $4.2$]{koolen2018on}}]\label{kyy: min forbidden finite for -2}
    Let $M$ be a real symmetric matrix, whose diagonal entries are $0$ or $-1$ and off-diagonal entries are integers. If the smallest eigenvalue of M is less than $-2$, then one of its principal submatrices of order at most $10$ also has an eigenvalue less than $-2$. 
\end{theorem}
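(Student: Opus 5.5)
We argue by a minimal counterexample and reduce to the classification of integral lattices generated by vectors of squared norm $1$ and $2$.

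Replace $M$ by $B:=M+2\mathbf{I}$. Its diagonal entries are $1$ or $2$ and its off-diagonal entries are integers, and $\lambda_{\min}(M)<-2$ means exactly that $B$ is not positive semidefinite. Choose a principal submatrix $B_0$ of $B$ that is not positive semidefinite and has the smallest possible order $t$. It suffices to show $t\le 10$.

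\textbf{Step 1: shape of $B_0$.} By minimality, every proper principal submatrix of $B_0$ is positive semidefinite. By Cauchy interlacing, $B_0$ then has exactly one negative eigenvalue, so $\det B_0<0$.

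\textbf{Step 2: a lattice for a proper submatrix.} Delete one index $v$ from $B_0$ to get a positive semidefinite integral matrix $B_1$. Then $B_1$ is the Gram matrix of vectors $\mathbf{u}_1,\dots,\mathbf{u}_{t-1}$ of squared norm $1$ or $2$.
\begin{itemize}
\item The $2\times 2$ conditions force all off-diagonal entries into $\{0,\pm1,\pm2\}$.
\item An entry $\pm2$ occurs only between two norm-$2$ vectors with $\mathbf{u}_i=\pm\mathbf{u}_j$. In that case one of them can be deleted; minimality should exclude this, or at least reduce us to the case without such duplicates.
\end{itemize}
The lattice $L$ generated by the $\mathbf{u}_i$ is generated by norm $1$ and $2$ vectors. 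Split off the norm-$1$ vectors: they generate a copy of $\mathbb{Z}^a$, and they split off because any integral lattice vector has integral inner product with a unit vector. The rest is generated by roots, so by Theorem \ref{irreducible root lattices}, $L\cong\mathbb{Z}^a\oplus R$ with $R$ a sum of $A_n,D_n,E_6,E_7,E_8$.

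\textbf{Step 3: the obstruction is small.} This is the step I expect to be the main obstacle. The row of $B_0$ indexed by $v$ prescribes integers $c_i=(\mathbf{w},\mathbf{u}_i)$ and a norm $N\in\{1,2\}$ for a putative new vector $\mathbf{w}$. Since $B_0$ is not positive semidefinite, no such $\mathbf{w}$ exists in any extension. Equivalently, the linear functional $\mathbf{u}_i\mapsto c_i$, where well defined on $L$, is represented by some $\mathbf{w}_0\in L^*\otimes\mathbb{R}$ with $\|\mathbf{w}_0\|^2>N$.

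The plan is to locate a small subset of the $\mathbf{u}_i$ that already certifies $\|\mathbf{w}_0\|^2>N$.
\begin{itemize}
\item Project $\mathbf{w}_0$ onto the irreducible components of $L$. For $\mathbb{Z}^a$, the functional is a coordinate vector, and its norm excess is witnessed by few unit vectors.
\item For $A_n$ and $D_n$, the minimal-norm dual coset representatives are explicit: they have coordinates $0,\pm\tfrac12,\pm1$ and norm computed from few coordinates. This should give a witnessing subset of bounded size.
\item For $E_6,E_7,E_8$ (rank $\le 8$), the witness set is at most a basis together with a few extra vectors.
\end{itemize}
The delicate part is combining components. Because $\|\mathbf{w}_0\|^2>N$ with $N\le 2$ and dual norms are quantized, at most a bounded number of components contribute. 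I would bound the total witness count by $9$ plus the vertex $v$, the sharp case being $E_8$ with a vector orthogonal to it. To make this rigorous, I would first try a direct case analysis on the "excess" contributions $\|\pi_j(\mathbf{w}_0)\|^2\in\{\tfrac14,\tfrac12,\tfrac34,1,\dots\}$ from each component $j$.

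\textbf{Step 4: conclusion.} Such a witnessing subset, together with $v$, gives a principal submatrix of order at most $10$ that is not positive semidefinite. By minimality of $t$, this forces $t\le10$, which proves the statement.
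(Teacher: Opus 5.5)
\textbf{Step 3 is a restatement of the theorem, not a proof of it.} The paper gives no proof of this statement (it is quoted from Koolen--Yang--Yang), so your argument has to stand on its own, and this is where it has a genuine gap. By minimality of $t$, no proper subset of $\{\mathbf{u}_1,\dots,\mathbf{u}_{t-1}\}$ together with $v$ yields a principal submatrix that fails to be positive semidefinite. So ``some witnessing subset has at most $9$ vectors'' is equivalent to $t\le 10$.

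Your heuristics also control the wrong quantity. Dual-coset norms and quantization constrain $\|\mathbf{w}_0\|^2$. But a witness must consist of the \emph{given} vectors $\mathbf{u}_i$, and what matters is the squared norm of the orthogonal projection of $\mathbf{w}_0$ onto their span, which depends on the generating set. Two examples show this.
\begin{itemize}
\item Suppose the $\mathbb{Z}^a$-part ($a\ge 11$) is generated by $\mathbf{e}_1-\mathbf{e}_2,\dots,\mathbf{e}_{a-1}-\mathbf{e}_a,\mathbf{e}_a$, with $N=2$ and $\mathbf{w}_0=\mathbf{e}_1+\mathbf{e}_2+\mathbf{e}_3$. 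The unique unit vector $\mathbf{e}_a$ helps only if all $a$ generators are used. The smallest witness is the nine roots $\mathbf{e}_1-\mathbf{e}_2,\dots,\mathbf{e}_9-\mathbf{e}_{10}$, with projection norm $3-\tfrac{9}{10}$. So ``witnessed by few unit vectors'' is false.
\item Your description of dual cosets is also off. In the usual coordinates, minimal representatives of $A_n^*$ have coordinates in $\tfrac{1}{n+1}\mathbb{Z}$. The spinor cosets of $D_n^*$ have norm $n/4$ spread over all $n$ coordinates. For instance, $D_9$ with its standard simple roots, $N=2$ and $\mathbf{w}_0=(\tfrac12,\dots,\tfrac12)$ is a minimal configuration using all nine roots.
\end{itemize}
Both examples, with $v$ added, give the tree on $10$ vertices with arms of lengths $1,2,6$. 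So the bound $10$ is attained by classical components of rank $9$, not only around $E_8$.

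Quantization bounds how many components contribute, not their ranks, and large $\mathbb{Z}^a$-, $A_n$-, $D_n$-type pieces are exactly where the content lies. You would need actual lemmas here. For $A_n$, identify the basis roots with the edges of a spanning tree on $n+1$ points and $\mathbf{w}_0$ with integer labels on the points. The lemma would say: if the labels' sum of squared deviations from their mean exceeds $N$, then some subforest with at most $9$ edges still has its componentwise sum of squared deviations exceeding $N$. You would also need analogues for signed configurations in $D_n$ and $\mathbb{Z}^a$, and an argument combining components.

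\textbf{The phrase ``where well defined'' drops a case that cannot be avoided.} Minimality forces $B_0$ to be nonsingular. Indeed, if $B_0\mathbf{y}=\mathbf{0}$ with $\mathbf{y}\neq\mathbf{0}$ and $\mathbf{x}^TB_0\mathbf{x}<0$, then $(\mathbf{x}+s\mathbf{y})^TB_0(\mathbf{x}+s\mathbf{y})=\mathbf{x}^TB_0\mathbf{x}<0$ for all $s$. Choosing $s$ to kill a coordinate gives a smaller principal submatrix that is not positive semidefinite. This is also what Step~1 needs for $\det B_0<0$: interlacing alone gives one negative eigenvalue, not nonsingularity.

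Hence whenever $B_1$ is singular, $c$ is not in its column space. There is then no $\mathbf{w}_0$ at all, and the obstruction is a linear relation among the $\mathbf{u}_i$ that $c$ violates, necessarily involving all of them. This does occur: for the matrix $B_0$ with rows $(1,1,1)$, $(1,1,-1)$, $(1,-1,1)$, every choice of $v$ gives a singular $B_1$. A priori such relations can be arbitrarily long, e.g.\ the cycle $\mathbf{e}_1-\mathbf{e}_2,\dots,\mathbf{e}_{k-1}-\mathbf{e}_k,\mathbf{e}_k-\mathbf{e}_1$, so this case needs its own bound.

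The same observation is the natural entry point. With $B_0$ nonsingular, interlacing gives $\rk B_1\ge t-2$, so $t\le 10$ is immediate whenever the $\mathbf{u}_i$ span a space of dimension at most $8$ (for instance inside $E_8$). What remains is the case of several components, or of $\mathbb{Z}^a$-, $A_n$-, $D_n$-type pieces of large rank. That is the substance of the cited result, and your proposal does not yet address it.
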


\begin{theorem}\label{min forbidden finite for -2}
Let $\mathfrak{f}\in\mathcal{F}(-3)$ and $S(\mathfrak{f})$ be the special matrix of $\mathfrak{f}$.
\begin{enumerate}[label=\rm{(\roman*)}]
    \item If $\mathfrak{f}$ contains a slim vertex with at least three fat neighbors, then  $S(\mathfrak{f})$ is
$$(-4)\text{ or }\begin{pmatrix} -3 & a_2 \\ a_2 & a_1 \end{pmatrix} , $$
where $a_1,a_2$ are integers such that $a_1\in\{-1,-2,-3\}$ and $1\leq|a_2|\leq 1-a_1$.
\item If each slim vertex of $\mathfrak{f}$ has at most two fat neighbors, then $\mathfrak{f}$ has at most $10$ slim vertices.
\end{enumerate}
In particular, the set $\mathcal{F}(-3)$ is finite.
\end{theorem}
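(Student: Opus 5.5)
We work directly with the special matrix $S(\mathfrak{f})=A_s-CC^T$. For slim vertices $x,y$ its entries are
\[
S(\mathfrak{f})_{xx}=-\deg_f(x),\qquad S(\mathfrak{f})_{xy}=A_s(x,y)-n^f_{\mathfrak{f}}(x,y),
\]
where $\deg_f(x)$ denotes the number of fat neighbours of $x$.

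The basic tool is the following observation. Let $T\subseteq V_s(\mathfrak{f})$, and let $\mathfrak{f}_T$ be the induced Hoffman subgraph of $\mathfrak{f}$ on $T$ together with all fat neighbours of vertices of $T$. Then $\mathfrak{f}_T$ is again fat. Moreover, $S(\mathfrak{f}_T)$ is exactly the principal submatrix of $S(\mathfrak{f})$ indexed by $T$, because every fat vertex that contributes to $CC^T$ on $T$ has been kept. By minimality, if this principal submatrix has smallest eigenvalue less than $-3$, then $\mathfrak{f}_T=\mathfrak{f}$, so $V_s(\mathfrak{f})=T$. Conversely, if $T$ is a proper subset of $V_s(\mathfrak{f})$, then $\mathfrak{f}_T$ is a proper fat induced Hoffman subgraph, and so its special matrix has all eigenvalues at least $-3$.

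For (i), let $x$ be a slim vertex with at least three fat neighbours.

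If $\deg_f(x)\ge 4$, then the $1\times1$ principal submatrix $(-\deg_f(x))$ already has an eigenvalue below $-3$. Hence $\mathfrak{f}=\mathfrak{f}_{\{x\}}$. This in turn forces $x$ to have exactly four fat neighbours, since removing one of five or more fat neighbours leaves a proper forbidden fat subgraph. Therefore $S(\mathfrak{f})=(-4)$.

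Now suppose $\deg_f(x)=3$, and that some other slim vertex $y$ has $a_2:=S_{xy}\neq0$. Put $a_1:=S_{yy}$. The matrix $\begin{pmatrix}-3&a_2\\ a_2&a_1\end{pmatrix}+3\mathbf{I}$ has determinant $-a_2^2<0$, so this $2\times 2$ principal submatrix has an eigenvalue below $-3$. Hence $V_s(\mathfrak{f})=\{x,y\}$. We then pin down the parameters:
\begin{itemize}
\item $a_1\ge -3$, because otherwise the proper subgraph $\mathfrak{f}_{\{y\}}$ would already be forbidden.
\item $a_1\le -1$, because $\mathfrak{f}$ is fat, so $y$ has at least one fat neighbour.
\item $|a_2|\le 1+|n^f(x,y)|\le 1+\deg_f(y)=1-a_1$.
\end{itemize}

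It remains to exclude the case $S_{xy}=0$ for every slim $y\neq x$. In that case $S(\mathfrak{f})=(-3)\oplus S'$, where $S'$ is the principal submatrix on $V_s(\mathfrak{f})\setminus\{x\}$. This set is nonempty, since otherwise $S(\mathfrak{f})=(-3)$ has no eigenvalue below $-3$. By the observation, $S'$ is the special matrix of the proper fat subgraph $\mathfrak{f}_{V_s\setminus\{x\}}$, so $S'$ has smallest eigenvalue at least $-3$. Then $S(\mathfrak{f})$ does too, which is a contradiction.

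For (ii), every slim vertex has one or two fat neighbours, since $\mathfrak{f}$ is fat. So $M:=S(\mathfrak{f})+\mathbf{I}$ is a real symmetric matrix with diagonal entries in $\{0,-1\}$ and integer off-diagonal entries. Its smallest eigenvalue is less than $-2$. By Theorem \ref{kyy: min forbidden finite for -2}, some principal submatrix of $M$ of order at most $10$, indexed by a set $T$, has an eigenvalue less than $-2$. Equivalently, the corresponding principal submatrix of $S(\mathfrak{f})$ has an eigenvalue less than $-3$. The observation then gives $V_s(\mathfrak{f})=T$, so $|V_s(\mathfrak{f})|\le 10$.

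For finiteness, in every case $\mathfrak{f}$ has at most $10$ slim vertices, and each slim vertex has at most $4$ fat neighbours. Every fat vertex has a neighbour, which must be slim, so there are at most $40$ fat vertices. Hence only finitely many Hoffman signed graphs can occur, and $\mathcal{F}(-3)$ is finite.

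The only delicate step is the observation itself: one must check that keeping all fat neighbours of $T$ makes the special matrix restrict exactly to the principal submatrix, and that $\mathfrak{f}_T$ is still fat. The rest is routine.
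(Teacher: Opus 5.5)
Your proof is correct and follows the same route as the paper: a direct analysis of $1\times 1$ and $2\times 2$ principal submatrices for (i), Theorem \ref{kyy: min forbidden finite for -2} applied to $S(\mathfrak{f})+\mathbf{I}$ for (ii), and a vertex count for finiteness. Your observation that $S(\mathfrak{f}_T)$ is exactly the principal submatrix of $S(\mathfrak{f})$ on $T$ spells out a step the paper only uses implicitly, and your case analysis supplies the details of (i) that the paper merely asserts; your bound of $40$ fat vertices is cruder than the paper's $20$ but equally sufficient.
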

\begin{proof}
Let $\mathfrak{f}\in\mathcal{F}(-3)$ and consider its special matrix $S(\mathfrak{f})$. By the definition of the set $\mathcal{F}(-3)$,  $\lambda_{\min}(S(\mathfrak{f}))<-3$ and every principal submatrix of $S(\mathfrak{f})$ has smallest eigenvalue at least $-3$. One may verify that, for any slim vertex $x$ of $\mathfrak{f}$
$$S(\mathfrak{f})_{xx}=-|\{F\mid F\textrm{ is a fat neighbor of }x\}|,$$
and for any pair of distinct slim vertices $x,y$ of $\mathfrak{f}$ $$|S(\mathfrak{f})_{xy}|\leq 1+\min\{|S(\mathfrak{f})_{xx}|,|S(\mathfrak{f})_{yy}|\}.$$
\par
$\rm{(i)}$ Assume that $\mathfrak{f}$ contains a slim vertex $x$ with at least three fat neighbors, then the special matrix $S(\mathfrak{f})$ is $$(-4)\text{ or }\begin{pmatrix} -3 & a_2 \\ a_2 & a_1 \end{pmatrix} , $$
where $a_1,a_2$ are integers such that $a_1\in\{-1,-2,-3\}$ and $1\leq|a_2|\leq 1-a_1$.\par
$\rm{(ii)}$ Since each slim vertex of $\mathfrak{f}$ has at most two fat neighbors and $\mathfrak{f}$ is fat, the diagonal entries of $S(\mathfrak{f})$ are $-1$ or $-2$. Thus, the matrix $S(\mathfrak{f})+\mathbf{I}$ is a real symmetric matrix whose diagonal entries are $0$ or $-1$ and off-diagonal entries are integers, and has smallest eigenvalue less than $-2$. As every principal submatrix of $S(\mathfrak{f})+\mathbf{I}$ has smallest eigenvalue st least $-2$, Theorem \ref{kyy: min forbidden finite for -2} implies that the order of $S(\mathfrak{f})$ is at most $10$. Hence, $\mathfrak{f}$ has at most $10$ slim vertices. \par

Therefore, for any $\mathfrak{f}\in \mathcal{F}(-3)$, $\mathfrak{f}$ has at most $10$ slim vertices and at most $20$ fat vertices, which yields that $\mathcal{F}(-3)$ is finite. This completes the proof.
\end{proof}

\begin{theorem}\label{existence of fat HFSG for -3}
Let $\Delta=\min\{-3-\lambda_{\min}(\mathfrak{f})\mid \mathfrak{f}\in\mathcal{F}(-3)\}$. There exists a positive integer $\kappa'$ such that, if a connected signed graph $(G,\sigma)$ has smallest eigenvalue $\lambda_{\min}(G,\sigma)> -3-\Delta$ and minimum valency at least $\kappa'$, then $(G,\sigma)$ is the slim graph of a fat Hoffman signed graph with smallest eigenvalue at least $-3$. 
\end{theorem}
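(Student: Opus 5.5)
The plan is the standard forbidden-subgraph argument. Put $\lambda_0:=-3-\Delta$. By Theorem \ref{min forbidden finite for -2} the set $\mathcal{F}(-3)$ is finite, and every $\mathfrak{f}\in\mathcal{F}(-3)$ has $\lambda_{\min}(\mathfrak{f})<-3$. Hence $\Delta>0$, and $\lambda_{\min}(\mathfrak{f})\le\lambda_0$ for every $\mathfrak{f}\in\mathcal{F}(-3)$.

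\textbf{Step 1 (build the fat Hoffman graph).} Apply Theorem \ref{existence of fat HSG} with $\lambda=\lambda_0$, $v_f=20$, $v_s=10$ and some $p$ to be fixed in Step 3. Fix $n\ge q(\lambda_0,20,10,p)$ and set $\kappa':=d(\lambda_0,n)$. If $(G,\sigma)$ is connected with $\lambda_{\min}(G,\sigma)>\lambda_0$ and minimum valency at least $\kappa'$, we obtain a fat Hoffman signed graph $\mathfrak{g}=\mathfrak{g}(G,\sigma,m_{\lambda_0},n)$ whose slim graph is $(G,\sigma)$. It remains to show $\lambda_{\min}(\mathfrak{g})\ge-3$.

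\textbf{Step 2 (extract a forbidden subgraph).} Suppose $\lambda_{\min}(\mathfrak{g})<-3$. Among the fat induced Hoffman subgraphs of $\mathfrak{g}$ with smallest eigenvalue $<-3$, choose one, $\mathfrak{f}$, that is minimal under taking fat induced Hoffman subgraphs. Such a minimal one exists because $\mathfrak{g}$ is finite and is itself a candidate. By minimality $\mathfrak{f}$ is isomorphic to a member of $\mathcal{F}(-3)$. By Theorem \ref{min forbidden finite for -2}, $\mathfrak{f}$ has at most $10$ slim and $20$ fat vertices. Theorem \ref{existence of fat HSG}(ii) then shows that $G(\mathfrak{f},p)$ is an induced subgraph of $(G,\sigma)$. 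Eigenvalue interlacing gives
\[
\lambda_{\min}(G,\sigma)\le\lambda_{\min}(G(\mathfrak{f},p)).
\]

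\textbf{Step 3 (the boundary gap; main obstacle).} Theorem \ref{Hoffman-Ostrowski Theorem of signed case} says $\lambda_{\min}(G(\mathfrak{f},p))\to\lambda_{\min}(\mathfrak{f})\le\lambda_0$ as $p\to\infty$. The approach is from above, so this alone only gives $\lambda_{\min}(G,\sigma)\le\lambda_0+\eta$ for a margin $\eta>0$ that shrinks as $p$ grows. That is exactly the weakened version, not the stated result.

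The split that handles this uses finiteness of $\mathcal{F}(-3)$. For each of the finitely many $\mathfrak{f}$ with $\lambda_{\min}(\mathfrak{f})<\lambda_0$ strictly, choose $p$ large enough that $\lambda_{\min}(G(\mathfrak{f},p))\le\lambda_0$. This covers them all with a single $p$ and contradicts $\lambda_{\min}(G,\sigma)>\lambda_0$.

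For the extremal $\mathfrak{f}$, those with $\lambda_{\min}(\mathfrak{f})=\lambda_0$ exactly, a different comparison is needed. In $\mathfrak{g}$ each fat vertex comes from a clique of size at least $n$ in $(G,\sigma)$. I would first try to show that, inside $(G,\sigma)$, the copy of $\mathfrak{f}$ spans an induced subgraph whose smallest eigenvalue is at most $\lambda_{\min}(\mathfrak{f})$ itself, not merely close to it. The intended route is to take an eigenvector $\mathbf{u}$ of $S(\mathfrak{f})$ for $\lambda_{\min}(\mathfrak{f})$. On each fat clique of $G(\mathfrak{f},p)$, put the value $-\frac1p\sum_{x\sim F}\pm u_x$, choosing the signs according to $\sigma$. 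The Rayleigh quotient of this test vector exceeds $\lambda_{\min}(\mathfrak{f})$ only by a term of order $1/(p+1)$. I would then look for extra vertices of $(G,\sigma)$, supplied by the large minimum valency (for instance further slim neighbours of the fat cliques), that compensate this term.

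If that fails, the fallback is to use the rigidity of $\mathfrak{f}$ from Theorem \ref{min forbidden finite for -2}. Either $S(\mathfrak{f})$ is one of the explicit small matrices in (i), or $S(\mathfrak{f})+\mathbf{I}$ is minimal forbidden for $-2$ of order at most $10$ as in (ii). With either description I would attempt to rule out, case by case, that any extremal $\mathfrak{f}$ occurs inside a fat Hoffman graph of the form $\mathfrak{g}$, given the constraints that $\mathfrak{g}$ carries.

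I expect Step 3 for the extremal $\mathfrak{f}$ to be the genuine difficulty.
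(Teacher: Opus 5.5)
Your Steps 1 and 2 match the paper's proof: Theorem~\ref{existence of fat HSG} with $v_f=20$, $v_s=10$, extraction of a member of $\mathcal{F}(-3)$, and interlacing. The paper closes the argument by asserting that Theorem~\ref{Hoffman-Ostrowski Theorem of signed case} gives, for \emph{every} $\mathfrak f\in\mathcal F(-3)$, some $p_{\mathfrak f}$ with $\lambda_{\min}(G(\mathfrak f,p_{\mathfrak f}))\le -3-\Delta$. You are right that this does not follow for an extremal $\mathfrak f$, i.e.\ one with $\lambda_{\min}(\mathfrak f)=-3-\Delta$; there the theorem gives the reverse inequality.

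Completing the square on each clique makes this precise. Let $\mu=\lambda_{\min}(\mathfrak f)<-1$ and $S(\mathfrak f)=A_s-CC^T$. Then every vector $\mathbf z$ on the vertex set of $G(\mathfrak f,p)$, with restriction $\mathbf u$ to the slim vertices, satisfies
\[
\mathbf z^T\bigl(A(G(\mathfrak f,p))-\mu\mathbf I\bigr)\mathbf z\ \ge\ \mathbf u^T\bigl(S(\mathfrak f)-\mu\mathbf I\bigr)\mathbf u+\frac{-1-\mu}{p-1-\mu}\,\|C^T\mathbf u\|^2 .
\]
Hence $A(G(\mathfrak f,p))-\mu\mathbf I$ is positive definite, so $\lambda_{\min}(G(\mathfrak f,p))>-3-\Delta$ for every $p$, unless some $\mu$-eigenvector $\mathbf u$ of $S(\mathfrak f)$ has $C^T\mathbf u=\mathbf 0$. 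Your proposal therefore has a genuine gap, and it sits exactly where the paper's own proof is unjustified.

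Neither of your repairs can close this gap, because with this exact $\Delta$ the statement is false.

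\begin{itemize}
\item By minimality, every $\mu$-eigenvector of $S(\mathfrak f)$ is nowhere zero, since deleting a slim vertex gives a proper fat induced Hoffman subgraph. So $C^T\mathbf u\neq\mathbf 0$ as soon as some fat vertex has a single slim neighbour.
\item Such an extremal configuration always exists. The tree $T(1,2,6)$, with a private pendant fat vertex at each slim vertex, lies in $\mathcal F(-3)$ with smallest eigenvalue $-1-\rho\approx-3.0066$, where $\rho$ is its spectral radius.
\item Consequently an extremal $\mathfrak f$ is not in case (i) of Theorem~\ref{min forbidden finite for -2}. A short determinant argument on $S(\mathfrak f)+3\mathbf I$ shows it has at least four slim vertices.
\item Minimality then forces the off-diagonal entries of $S(\mathfrak f)$ into $\{0,\pm1\}$. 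A larger entry either yields a $2\times2$ principal submatrix with eigenvalue $<-3$, or makes two columns of $S(\mathfrak f)+3\mathbf I$ equal up to sign, so deleting one of them keeps the negative eigenvalue.
\item So $S(\mathfrak f)$ is also the special matrix of a fat Hoffman signed graph $\mathfrak f'$. Its slim graph has adjacency matrix $S(\mathfrak f)-\operatorname{diag}(S(\mathfrak f))$, and each slim vertex $x$ receives $|S(\mathfrak f)_{xx}|$ private pendant fat vertices. This $\mathfrak f'$ is connected, because minimality makes $S(\mathfrak f)$ irreducible.
\item For large $k$, $G(\mathfrak f',k)$ is connected, has minimum valency at least $k$, and satisfies $-3-\Delta<\lambda_{\min}(G(\mathfrak f',k))<-3$.
\item The slim graph of a fat $\mathfrak g$ has adjacency matrix $S(\mathfrak g)+CC^T$. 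Hence $G(\mathfrak f',k)$ is not the slim graph of any fat Hoffman signed graph with smallest eigenvalue at least $-3$.
\end{itemize}

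In this example there are no extra vertices to compensate with, and the extremal configuration really occurs, so both of your routes are blocked. The paper's choice $\varepsilon:=\Delta$ in the proof of Theorem~\ref{main result} has the same defect.

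What your split argument does prove is the corrected statement. For any $0<\Delta'<\Delta$, choose $p$ with $\lambda_{\min}(G(\mathfrak f,p))\le-3-\Delta'$ for all $\mathfrak f\in\mathcal F(-3)$; this is possible because each limit is at most $-3-\Delta<-3-\Delta'$. Steps 1 and 2 then give the conclusion under the hypothesis $\lambda_{\min}(G,\sigma)>-3-\Delta'$. That is all Theorem~\ref{main result} needs, taking $\varepsilon:=\Delta'$.
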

\begin{proof}
For each $\mathfrak{f}\in \mathcal{F}(-3)$, there exists a positive integer $p_{\mathfrak{f}}$ such that $\lambda_{\min}(G(\mathfrak{f},p_{\mathfrak{f}}))\leq -3-\Delta$, by Theorem \ref{Hoffman-Ostrowski Theorem of signed case}. Since $\mathcal{F}(-3)$ is finite, we may denote $p=\max\{p_{\mathfrak{f}}\mid \mathfrak{f}\in\mathcal{F}(-3)\}$. 
Let $\lambda=-3-\Delta$, $v_f=20$, $v_s=10$ and let $m_{\lambda}$, $q(\lambda,v_f,v_s,p)$ and $d(\lambda,q(\lambda,v_f,v_s,p))$ be integers such that Theorem \ref{existence of fat HSG} holds. Let $\kappa':=d(\lambda,q(\lambda,v_f,v_s,p))$.\par
For a connected signed graph $(G,\sigma)$ with smallest eigenvalue $\lambda_{\min}(G,\sigma)>-3-\Delta$, if the minimum valency of $(G,\sigma)$ is at least $\kappa'$, then by Theorem \ref{existence of fat HSG}, there exists a fat Hoffman signed graph $\mathfrak{g}=\mathfrak{g}(G,\sigma,m_{\lambda},q(\lambda,v_f,v_s,p))$ such that $(G,\sigma)$ is the slim graph of $\mathfrak{g}$, and if $\mathfrak{h}$ is an induced Hoffman subgraph of $\mathfrak{g}$ with at most $v_f$ fat vertices and at most $v_s$ slim vertices, then the signed graph $G(\mathfrak{h},p)$ is an induced subgraph of $(G,\sigma)$.\par

 It remains to show that $\lambda_{\min}(\mathfrak{g})\geq -3$. Suppose, for a contradiction, $\lambda_{\min}(\mathfrak{g})<-3$, then $\mathfrak{g}$ contains a Hoffman signed graph $\mathfrak{f}\in\mathcal{F}(-3)$ as an induced Hoffman subgraph. Hence, the signed graph $G(\mathfrak{f},p)$ is an induced subgraph of $(G,\sigma)$ and $-3-\Delta\geq\lambda_{\min}(G(\mathfrak{f},p))\geq\lambda_{\min}(G,\sigma)>-3-\Delta$. This gives a contradiction.\par
This completes the proof.
\end{proof}

Now we prove Theorem \ref{main result}.

\noindent
\textbf{Proof of Theorem \ref{main result}}. 
Let $\varepsilon:=\Delta$ be a real number and $\kappa_3:=\kappa'$ be a positive integer such that Theorem \ref{existence of fat HFSG for -3} holds. Let  $(G,\sigma)$ be a connected signed graph with smallest eigenvalue $\lambda_{\min}(G,\sigma)> -3-\varepsilon$ and minimum valency at least $\kappa_3$. By Theorem \ref{existence of fat HFSG for -3}, there exists a fat Hoffman signed graph $\mathfrak{g}$ with smallest eigenvalue $\lambda_{\min}(\mathfrak{g})\geq -3$ which has $(G,\sigma)$ as its slim graph, and then $\lambda_{\min}(G,\sigma)\geq \lambda_{\min}(\mathfrak{g})\geq-3$.\par

It remains to show that $\Lambda(G,\sigma)$ is a sublattice of a direct sum of the standard lattice $\mathbb{Z}^n$ and copies of the root lattice $E_8$. Without loss of generality, we may assume that $-3\leq \lambda_{\min}(G,\sigma)<-2$, by Theorem \ref{irreducible root lattices}. First, it is shown that each non-trivial irreducible sublattice of $\Lambda^{\rm{red}}(\mathfrak{g},3)$ is a standard lattice or a root lattice. Since $\lambda_{\min}(\mathfrak{g})\geq -3$, the matrix $S(\mathfrak{g})+3\mathbf{I}$ is positive semidefinite. There exists a matrix $N$ such that $S(\mathfrak{g})+3\mathbf{I}=N^TN$. As $\mathfrak{g}$ is fat and $\lambda_{\min}(\mathfrak{g})\geq -3$, each diagonal entry of $S(\mathfrak{g})$ is either $-1,-2$ or $-3$, and then the squared norm of each column of $N$ is one of $2,1$ and $0$. By Theorem \ref{irreducible root lattices}, the desired result follows.\par

 Since $\Lambda(\mathfrak{g},3)=\Lambda^{\rm{red}}(\mathfrak{g},3)\oplus \mathbb{Z}^{|V_f(\mathfrak{g})|}$ by Lemma \ref{reduced representation lemma}, each non-trivial irreducible sublattice of $\Lambda^{\rm{red}}(\mathfrak{g},3)$ is also a standard lattice or a root lattice. Thus $\Lambda(\mathfrak{g},3)$ decomposes as a direct sum of irreducible sublattices of $\mathbb{Z}^n$ or $E_8$, so does the lattice $\Lambda(G,\sigma)$ which is the sublattice of  $\Lambda(\mathfrak{g},3)$.\par

Hence the theorem follows.\qed

This paper concludes with the proof of Theorem \ref{main result 2}.

\noindent
\textbf{Proof of Theorem \ref{main result 2}}. 
Let $\kappa_4:=\max\{\kappa_3,\kappa'\}$ be a positive integer such that both Theorem \ref{main result} and Theorem \ref{existence of fat HFSG for -3} hold. Let $(G,\sigma)$ be a connected signed graph with smallest eigenvalue $\lambda_{\min}(G,\sigma)\geq -3$ and minimum valency at least $\kappa_4$. By Theorem \ref{main result}, it remains only to prove the existence of connected signed graphs with smallest eigenvalue at least $-3$ containing $(G,\sigma)$ as a proper induced subgraph. \par
Theorem \ref{existence of fat HFSG for -3} shows that there exists a fat Hoffman signed graph $\mathfrak{g}$ with smallest eigenvalue $\lambda_{\min}(\mathfrak{g})\geq -3$ which has $(G,\sigma)$ as its slim graph. For an arbitrary positive integer $q$, let $G(\mathfrak{g},q)$ be the signed graph obtained from $\mathfrak{g}$ by replacing each fat vertex with a positive clique $(K_q,+)$. Observe that $G(\mathfrak{g},q)$ contains $(G,\sigma)$ as a proper induced subgraph, and $G(\mathfrak{g},q)$ is connected, as $(G,\sigma)$ is connected. Also,
\[
\lambda_{\min}(G(\mathfrak{g},q))\geq \lambda_{\min}(\mathfrak{g})\geq -3
\]
by Theorem \ref{Hoffman-Ostrowski Theorem of signed case}. \par

This completes the proof. \qed

\section*{Acknowledgements}
J.H. Koolen is partially supported by the National Natural Science Foundation of China (No. 12471335), and the 
Anhui Initiative in Quantum Information Technologies (No. AHY150000).
Q. Yang is supported by the National Natural Science Foundation of China (No. 12401460).

\section*{Author Contributions}
All authors contributed equally in this work.

\section*{Financial disclosure}

None reported.

\section*{Conflict of interest}

The authors declare no potential conflict of interests.

\bibliography{CKLY-3}

@article {AllombertChenevier2025,
    AUTHOR = {Allombert, B. and Chenevier, G.},
     TITLE = {Unimodular hunting {II}},
   JOURNAL = {Forum Math. Sigma},
  FJOURNAL = {Forum of Mathematics. Sigma},
    VOLUME = {13},
      YEAR = {2025},
     PAGES = {e136},
      ISSN = {2050-5094},
   MRCLASS = {11H55 (11E41 11F60 11H56 11Y40)},
  MRNUMBER = {4950160},
MRREVIEWER = {Stefan\ K\"uhnlein},
       DOI = {10.1017/fms.2025.10058},
       URL = {https://doi.org/10.1017/fms.2025.10058},
}

@incollection {Bacher2001reseaux,
    AUTHOR = {Bacher, R. and Venkov, B.},
     TITLE = {R\'eseaux entiers unimodulaires sans racines en dimensions 27 et 28},
 BOOKTITLE = {R\'eseaux euclidiens, designs sph\'eriques et formes
              modulaires},
    SERIES = {Monogr. Enseign. Math.},
    VOLUME = {37},
     PAGES = {212--267},
 PUBLISHER = {Enseignement Math., Geneva},
      YEAR = {2001},
      ISBN = {2-940264-02-3},
   MRCLASS = {11H06 (11F27 11H50 11H56)},
  MRNUMBER = {1878751},
MRREVIEWER = {Gabriele\ Nebe}
}

@article {Balla2025equiangular,
    AUTHOR = {Balla, I.},
     TITLE = {Equiangular lines via matrix projection},
   JOURNAL = {Adv. Math.},
  FJOURNAL = {Advances in Mathematics},
    VOLUME = {482},
      YEAR = {2025},
     PAGES = {110620},
      ISSN = {0001-8708,1090-2082},
   MRCLASS = {14C30 (05C50)},
  MRNUMBER = {4976612},
       DOI = {10.1016/j.aim.2025.110620},
       URL = {https://doi.org/10.1016/j.aim.2025.110620},
}

@article{belardo2018open,
 TITLE = {Open problems in the spectral theory of signed graphs},
   AUTHOR ={Belardo, F. and Cioab\u{a}, S.M. and Koolen, J.H. and Wang, J.F.},
 JOURNAL = {Art Discrete Appl. Math.},
  FJOURNAL = {The Art of Discrete and Applied Mathematics},
    VOLUME = {1},
      YEAR = {2018},
    NUMBER = {2},
     PAGES = {2.10},
      ISSN = {2590-9770},
   MRCLASS = {05C50 (05C22)},
  MRNUMBER = {3997096},
MRREVIEWER = {Leila\ Parsaei Majd},
       DOI = {10.26493/2590-9770.1286.d7b},
       URL = {https://doi.org/10.26493/2590-9770.1286.d7b},
}

@phdthesis{borcherdsthesis,
  title={The Leech lattice and other lattices},
  author={Borcherds, R.E.},
school = {Trinity College, University of Cambridge},
year = {1984},
type = {Ph.D. thesis},
note = {Corrected electronic version, arXiv:math/9911195, 1999}
}

@article {conway1998a,
    AUTHOR = {Conway, J.H. and Sloane, N.J.A.},
     TITLE = {A note on optimal unimodular lattices},
   JOURNAL = {J. Number Theory},
  FJOURNAL = {Journal of Number Theory},
    VOLUME = {72},
      YEAR = {1998},
    NUMBER = {2},
     PAGES = {357--362},
      ISSN = {0022-314X,1096-1658},
   MRCLASS = {11H31 (11H06)},
  MRNUMBER = {1651697},
MRREVIEWER = {Renaud\ Coulangeon},
       DOI = {10.1006/jnth.1998.2257},
       URL = {https://doi.org/10.1006/jnth.1998.2257},
}

@book{Conway1999Sphere,
  author    = {Conway, J.H. and Sloane, N.J.A.},
  title     = {Sphere Packings, Lattices and Groups},
  edition   = {3rd},
  series    = {Grundlehren der mathematischen Wissenschaften},
  volume    = {290},
  publisher = {Springer},
  address   = {New York},
  year      = {1999},
  doi       = {10.1007/978-1-4757-6568-7}
}

@book {Ebeling2013lattices,
    AUTHOR = {Ebeling, W.},
     TITLE = {Lattices and Codes},
    SERIES = {Advanced Lectures in Mathematics},
   EDITION = {3rd},
 PUBLISHER = {Springer Spektrum, Wiesbaden},
      YEAR = {2013},
       DOI = {10.1007/978-3-658-00360-9},
       URL = {https://doi.org/10.1007/978-3-658-00360-9}
}

@article{Gavrilyuk2021signed,
AUTHOR= {Gavrilyuk, A.L. and Munemasa, A. and Sano, Y. and Taniguchi, T.},
 TITLE = {Signed analogue of line graphs and their smallest eigenvalues},
   JOURNAL = {J. Graph Theory},
  FJOURNAL = {Journal of Graph Theory},
    VOLUME = {98},
      YEAR = {2021},
    NUMBER = {2},
     PAGES = {309--325},
      ISSN = {0364-9024,1097-0118},
   MRCLASS = {05C50 (05C22 05C76 15A18 15B57)},
  MRNUMBER = {4371442},
MRREVIEWER = {Yong\ Lu},
       DOI = {10.1002/jgt.22699},
       URL = {https://doi.org/10.1002/jgt.22699},
}

@article {hoffman1977onsigned,
    AUTHOR = {Hoffman, A.J.},
     TITLE = {On signed graphs and gramians},
   JOURNAL = {Geometriae Dedicata},
  FJOURNAL = {Geometriae Dedicata},
    VOLUME = {6},
      YEAR = {1977},
    NUMBER = {4},
     PAGES = {455--470},
   MRCLASS = {15A45 (05C99)},
  MRNUMBER = {463211},
MRREVIEWER = {R.\ A.\ Brualdi},
       DOI = {10.1007/BF00147783},
       URL = {https://doi.org/10.1007/BF00147783}
}

@article{king2003a,
    AUTHOR = {King, O.D.},
     TITLE = {A mass formula for unimodular lattices with no roots},
   JOURNAL = {Math. Comp.},
  FJOURNAL = {Mathematics of Computation},
    VOLUME = {72},
      YEAR = {2003},
    NUMBER = {242},
     PAGES = {839--863},
      ISSN = {0025-5718,1088-6842},
   MRCLASS = {11H55 (11E41)},
  MRNUMBER = {1954971},
MRREVIEWER = {Hidenori\ Katsurada},
       DOI = {10.1090/S0025-5718-02-01455-2},
       URL = {https://doi.org/10.1090/S0025-5718-02-01455-2}
}

@article {Koolen2021recent,
    AUTHOR = {Koolen, J.H. and Cao, M.-Y. and Yang, Q.},
     TITLE = {Recent progress on graphs with fixed smallest adjacency
              eigenvalue: a survey},
   JOURNAL = {Graphs Combin.},
  FJOURNAL = {Graphs and Combinatorics},
    VOLUME = {37},
      YEAR = {2021},
    NUMBER = {4},
     PAGES = {1139--1178},
      ISSN = {0911-0119,1435-5914},
   MRCLASS = {05C50 (05C22 05C75 05D99 05E30 11H06)},
  MRNUMBER = {4280318},
       DOI = {10.1007/s00373-021-02296-8},
       URL = {https://doi.org/10.1007/s00373-021-02296-8},
}

@article{koolen2026a,
     author={Koolen, J.H. and Liu, J.-Y. and Yang, Q. and Cao, M.-Y.},
      title={A structure theory for signed graphs with fixed smallest eigenvalue}, 
      journal={arXiv:2602.20783},
      year={2026}
}

@article{koolen2018on,
 TITLE ={On graphs with smallest eigenvalue at least $-3$ and their lattices},
AUTHOR = {Koolen, J.H. and Yang, J.Y. and Yang, Q.},
   JOURNAL = {Adv. Math.},
  FJOURNAL = {Advances in Mathematics},
    VOLUME = {338},
      YEAR = {2018},
     PAGES = {847--864},
      ISSN = {0001-8708,1090-2082},
   MRCLASS = {05C50 (11H06)},
  MRNUMBER = {3861717},
MRREVIEWER = {Mihai\ Cipu},
       DOI = {10.1016/j.aim.2018.09.004},
       URL = {https://doi.org/10.1016/j.aim.2018.09.004},
}

@misc{MBperfectlattice,
  title={A catalogue of Perfect Lattices},
  author={Martinet, J. and Batut, C.},
  howpublished={\url{https://jamartin.perso.math.cnrs.fr/Lattices/}},
  note   = {Accessed: 26 June 2026}
}

@misc{NSlattice,
  author={Nebe, G. and Sloane, N.J.A.},
  title  = {A Catalogue of Lattices},
  howpublished={\url{http://www.math.rwth-aachen.de/~Gabriele.Nebe/LATTICES/}},
  note   = {Accessed: 26 June 2026}
}

@article {Witt1941Spiegelungsgruppen,
    AUTHOR = {Witt, E.},
     TITLE = {Spiegelungsgruppen und {A}ufz\"ahlung halbeinfacher {L}iescher
              {R}inge},
   JOURNAL = {Abh. Math. Sem. Hansischen Univ.},
  FJOURNAL = {Abhandlungen aus dem Mathematischen Seminar der Hansischen
              Universit\"at},
    VOLUME = {14},
      YEAR = {1941},
     PAGES = {289--322},
      ISSN = {0025-5858},
   MRCLASS = {09.1X},
  MRNUMBER = {5099},
MRREVIEWER = {O.\ F. G. Schilling},
       DOI = {10.1007/BF02940749},
       URL = {https://doi.org/10.1007/BF02940749},
}

\end{document}